\newcommand\R{\mathbb{R}}
\numberwithin{equation}{section}
\newtheorem{definition}{Definition}[section]
\newtheorem{lemma}{Lemma}[section]
\newtheorem{theorem}{Theorem}[section]
\newtheorem{remark}{Remark}[section]
\begin{document}
\title[Decay-estimates]{Decay estimates for Nonlinear Schr\"odinger \\equation with the inverse-square potential}

\author{Jialu Wang}
\address{Department of Mathematics, Beijing Institute of Technology, Beijing, China, 100081;} \email{jialu\_wang@bit.edu.cn}

\author{Chengbin Xu}
\address{School of Mathematics and Statistics, Qinghai Normal University, Xining, Qinghai, China, 810008; }
\email{xcbsph@163.com}

\author{Fang Zhang}
\address{Department of Mathematics, Beijing Institute of Technology, Beijing, China, 100081;} \email{zhangfang@bit.edu.cn}

\begin{abstract}
In this paper, we study the dispersive decay estimates for solution to the $3\mathrm{D}$ energy-critical nonlinear Schr\"odinger equation with an inverse-square operator $\mathcal{L}_a$ where the operator is denoted by $\mathcal{L}_{a}:=-\Delta+\frac{a}{|x|^2}$ with the constant $a\geq0$. Inspired by the work of \cite{KMVZZ1,K}, we first establish that the solutions exhibit $\dot{H}^1(\R^3)$ uniform regularity, derive the Lorentz-Strichartz estimates, and then obtain the desired decay estimates using the bootstrap argument.
The key ingredients of our approach include the equivalence of Sobolev norms and the fractional product rule.

\end{abstract}
 \maketitle

\begin{center}
\begin{minipage}{120mm}
   { \small {{\bf Key Words:} Decay estimates; Inverse-square potential; Schr\"odinger equation; Energy-critical.}
   }\\
\end{minipage}
\end{center}
\section{Introduction}
This paper is devoted to investigating the dispersive decay estimates for the energy-critical defocusing nonlinear Schr\"odinger equation (NLS) with an inverse-square potential in three
spatial dimensions, assuming initial data in ${ \dot{H}}^{1}(\R^3)$. To be precise, we consider the defocusing nonlinear Schr\"odinger equation
\begin{align}\label{equation}
\begin{cases}
i\partial_{t}u-\mathcal{L}_{a}u=|u|^4u,\ (t, x)\in \mathbb{R}\times \mathbb{R}^{3}\\
u(0, x)=u_{0}(x)\in\dot{ H}^{1}(\R^3),
\end{cases}
\end{align}
where the inverse-square potential operator is defined by $\mathcal{L}_{a}:=-\Delta+\frac{a}{|x|^2}$ with $a>-\frac14$ being a constant. We interpret $\mathcal{L}_{a}$ as the Friedrichs extension of the
quadratic form $Q$, defined initially on $C_{c}^{\infty}(\R^3\backslash\{0\})$ via
$$Q(u):=\int_{\R^3}\Big(|\nabla u|^2+\frac{|u|^2}{|x|^2}\Big)dx.$$
The restriction $a>-\frac14$ assures that the operator $\mathcal{L}_{a}$ is positive semi-definite operator, as discussed in \cite{KMVZZ}. The sharp Hardy inequality also indicates that
\begin{equation}
Q(u)=\|\sqrt{\mathcal{L}_{a}}u\|_{L_x^2}^2\sim\|\nabla u\|_{L_x^2}^2.
\end{equation}
As a consequence, the solutions to the equation \eqref{equation} conserve both mass and energy, which are defined respectively by
\begin{equation}
M(u)=\int_{\R^3}|u(t,x)|^2dx=M(u_0),
\end{equation}
and
\begin{equation}
E(u(t))=\int_{\R^3}\frac{1}{2}|\nabla u(t,x)|^2+\frac{a}{2|x|^2}|u(t,x)|^2+\frac{1}{6}|u(t,x)|^6\,dx.
\end{equation}\vspace{0.2cm}

There are several essential reasons why we are particularly concerned about this problem. First, the inverse-square operator arises frequently in various physical contexts, appearing as a scaling limit of more complex problems. There are numerous examples of such applications discussed in the mathematical literature, spanning a wide range of areas from combustion theory and the Dirac equation with Coulomb potential to the study of perturbations of classic space-time metrics such Schwarzshild and Reissner-Nordstr\"om; see \cite{BPST,BPST1,KS,PST,PST1,VZ} and the references therein.
Additional, this problem arises as a scaling limit of problems involving regular potentials with critical decay. The potential term $\frac{a}{|x|^2}$ possesses a homogeneity degree of $-2$, matching the scaling behavior of the Laplacian and rendering the operator $\mathcal{L}_a$ scale-invariant. This is the main reasons why this particular operator $\mathcal{L}_a$ has been singled out for detailed mathematical investigation.
In the end, dispersive estimates are fundamental tools in the research of nonlinear dispersive equations, with the nonlinear Schr\"odinger equation serving as a typical example. In recent years, the nonlinear Schr\"odinger equation with inverse-square potential has attracted considerable attention, particularly in relation to Strichartz estimates, well-posedness theory, and scattering results. Nonetheless, to the best of our knowledge, there is still a lack of results concerning dispersive decay estimates for NLS in the presence of an inverse-square potential.\vspace{0.2cm}

The significant attention has been devoted to studying the solutions of the linear Schr\"odinger equation with potential in the past years. Specifically, when the singularity of the potential function $V(x)$ is weaker than that of the inverse-square potential at the origin, such as in the case where $V(x)$ belongs to the Kato class, the properties of the Schr\"odinger semigroup $e^{itH}$ generated by the operator $H=-\Delta+V(x)$ have been extensively analyzed, see \cite{D, S, SSS,SSS1} for further details.
It is important to emphasize that the singular inverse-square potential $\frac{a}{|x|^2}$ lies in the borderline case and does not belong to the Kato class. In this context, both the strong maximum principle and Gaussian bound of the heat kernel associated with $\mathcal{L}_a$ are not valid when $a<0$. Consequently, studying dispersive equations involving the inverse-square potential presents great challenges, see \cite{BPST,VZ}. However, for $n\geq2$, the Strichartz estimates and time-decay estimates for the dispersive equations with an inverse-square potential have been established fortunately by Burq et al. \cite{BPST,BPST1,PST,PST1}. In the study of Strichartz estimates for the propagators $e^{it(-\Delta+V)}$, the decay $V(x)\sim|x|^{-2}$ is on a borderline for the validity of Strichartz estimate, refer to \cite{D,GVV}. Subsequently, Fanelli et al. in \cite{FFFP,FFFP1} developed the time-decay estimate for the Schr\"odinger flows with the scaling-critical electromagnetic potential, which includes the inverse-square potential as a specific case.
An important observation in their work is that the standard time-decay for the Schr\"odinger equation does not hold in the range $-\frac{(n-2)^2}{4}<a<0$, although Strichartz estimates remain valid in this range.
More recently, for dimension $n\geq3$, Mizutani \cite{M} studied endpoint Strichartz estimates for Schr\"odinger equation with the critical inverse-square potential where $a=\frac{-(n-2)^2}{4}$.
\vspace{0.2cm}

With regard to the linear dispersive decay estimate, classically going back to the work of Laplacian operator in the setting of \eqref{equation} with $a=0$. It is well known, from \cite{C,T}, that for the linear propagator $e^{it\Delta}$ of the Schr\"odinger equation in $\R^n$, one has
\begin{align}\label{estimate:l1linnftfy}
\|e^{it\Delta}f\|_{L_x^{\infty}}\lesssim t^{-\frac{n}{2}}\|f\|_{L_x^1}.
\end{align}
Also, by the conservation of mass, we have
\begin{align}\label{estimate:l22}
\|e^{it\Delta}f\|_{L^{2}}=\|f\|_{L^2},
\end{align}
which follows from the Plancherel theorem. As a result, the dispersive decay of the linear Schr\"odinger equation exhibits
\begin{align}
\|e^{it\Delta}f\|_{L_x^{p}}\lesssim t^{-n(\frac{1}{2}-\frac{1}{p})}\|f\|_{L_x^{p'}},
\end{align}
derived by interpolating \eqref{estimate:l1linnftfy} and \eqref{estimate:l22}.
On the other hand, in the study of defocusing energy-critical NLS, numerous results concerning scattering and well-posedness have been established.
The pioneering and seminal work of Bourgain \cite{Bourgain} addressed the radial case, while Colliander et al. \cite{CKSTT} proved the global well-posedness of the NLS for general initial data in spatial dimension $n=3$. Their argument was subsequently extended in \cite{RV,V,V2}, where global well-posedness was showed for spatial dimensions $n\geq4$, and simplified proofs of these results were presented in \cite{KV,V1}. For our operator $\mathcal{L}_a$, Zhang and Zheng \cite{ZZ} proved an interaction Morawetz estimate for the defocusing NLS with the inverse square potential, and then established the scattering result when $n\geq3$. We also refer the reader to scattering theory for the focusing NLS with an inverse square potential in the energy space $H^{1}(\R^n)$ for $n\geq3$, as established in \cite{Z}, which generalized the result of \cite{KMVZ,LMM} to accommodate more general nonlinear terms but with radial initial data.
Notably, in the same setting of energy-critical nonlinear Schr\"odinger equations for \eqref{equation}, Killip et al. in \cite{KMVZZ1} showed global well-posedness and scattering for the case $a>-\frac{1}{4}+\frac{1}{25}$, overcoming the difficulty that the equation preserves scale invariance, but it lacks the spatial translation invariance. Their significant result forms a robust foundation for investigating $\dot{H}^1(\R^3)$ uniform regularity in the current work. Their result can be summarized as follows:

\begin{theorem}\cite[Theorem 1.2]{KMVZZ1}\label{scatter}
Fix $a>-\frac{1}{4}+\frac{1}{25}$. Given $u_{0}\in \dot{H}^1(\R^3)$, there is a unique global solution $u$ to \eqref{equation} satisfying
\begin{equation}\label{spacetime-bound}
\int_{\R}\int_{\R^3}|u(t,x)|^{10}\,dxdt\leq C(\|u_{0}\|_{\dot{H}_{x}^{1}}).
\end{equation}
Moreover, the solution $u$ scatters, that is, there exist unique $u_{\pm}\in \dot{H}^1(\R^3)$ such that
\begin{equation}\label{spacetime-asy}
\lim_{t\rightarrow \pm \infty}\|u(t)-e^{-it\mathcal{L}_{a}}u_{\pm}\|_{\dot{H}_{x}^1}=0.
\end{equation}
\end{theorem}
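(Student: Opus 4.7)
The plan is to adapt the Kenig--Merle concentration-compactness/rigidity scheme to the operator $\LL_a$. The argument splits into three stages: (i) local well-posedness, small-data scattering, and a stability lemma in $\dot H^1(\R^3)$ for \eqref{equation}; (ii) extraction of a minimal non-scattering solution $u_c$ with pre-compact orbit via a linear profile decomposition tailored to $\LL_a$; and (iii) exclusion of $u_c$ by a Morawetz-type estimate.

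For Stage (i) I would use the dispersive and Strichartz estimates for $e^{-it\LL_a}$ (available for $a>-\tfrac14$), together with the equivalence of Sobolev norms $\|\sqrt{\LL_a}\,f\|_{L^p}\sim\|(-\Delta)^{1/2}f\|_{L^p}$ developed in KMVZZ. Combined with the fractional product rule applied to $|u|^4u$, this yields local well-posedness in $\dot H^1$, small-data scattering, and a perturbation theory in the critical norm $L^{10}_{t,x}$. For Stage (ii), assume by contradiction that the critical scattering energy
\[
E_c:=\sup\bigl\{E : \text{every } u_0 \text{ with } \|\nabla u_0\|_{L^2}^2\le E \text{ yields a scattering solution}\bigr\}
\]
is finite. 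The main technical step is a linear profile decomposition for $\dot H^1$-bounded sequences with respect to $e^{-it\LL_a}$. The crucial subtlety is that $\LL_a$ preserves scaling but breaks translation invariance, so two species of profiles must be tracked: those whose spatial centers $x_n$ remain bounded can be recentered at the origin and evolved under $\LL_a$; those with $|x_n|\to\infty$ feel the potential only weakly and must be evolved via the free propagator $e^{it\Delta}$, with the potential error controlled by dispersion. Extracting the minimum-energy failure of scattering then yields $u_c$ whose orbit is pre-compact in $\dot H^1$ modulo the scaling symmetry; the escaping-center scenario is ruled out at this stage because free-NLS solutions already scatter by the classical work of Colliander--Keel--Staffilani--Takaoka--Tao.

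Finally, the compact orbit $u_c$ is eliminated with a truncated virial/Morawetz identity. For a weight of the form $|x|$ (or a smooth truncation thereof), the virial computation produces a space-time integral $\iint|x|^{-1}|u_c|^6\,dx\,dt$ controlled by $\sup_t\|\nabla u_c(t)\|_{L^2}^2$, with the repulsive sign of the $\tfrac{a}{|x|^2}$ contribution keeping the output favorable whenever $a>-\tfrac14$. Compactness of the orbit modulo scaling forces this integral to diverge, yielding a contradiction unless $u_c\equiv0$. The principal obstacle is Stage (ii): one must prove a precise two-species profile decomposition and, more delicately, show that the nonlinear profiles associated with escaping centers are well-approximated in $L^{10}_{t,x}$ by free-NLS solutions, with the approximation error vanishing as $|x_n|\to\infty$. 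The quantitative restriction $a>-\tfrac14+\tfrac1{25}$ enters precisely here: it is the range in which the heat-kernel-based Sobolev-norm equivalence of KMVZZ remains valid on the exponent interval needed to apply the fractional chain rule to $|u|^4u$, and the numerology must be tracked carefully throughout the nonlinear embedding arguments.
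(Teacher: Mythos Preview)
The paper does not contain a proof of this theorem: it is quoted verbatim from \cite[Theorem~1.2]{KMVZZ1} and used as a black box (its space-time bound \eqref{spacetime-bound} is the input to the persistence-of-regularity argument in Section~\ref{regularity:uniform}). So there is no ``paper's own proof'' to compare against.

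That said, your outline is an accurate high-level summary of the strategy actually used in \cite{KMVZZ1}: the Kenig--Merle road map with a two-species profile decomposition (profiles centered near the origin evolve under $\LL_a$, profiles escaping to spatial infinity are approximated by solutions of the potential-free energy-critical NLS and handled via \cite{CKSTT}), followed by a rigidity step. You have also located correctly the origin of the numerical threshold $a>-\tfrac14+\tfrac1{25}$: it is exactly the range for which the Sobolev-norm equivalence of Lemma~\ref{Sobolev-norm} covers the exponents needed in the fractional product rule when estimating $\sqrt{\LL_a}(|u|^4u)$. One refinement worth noting: in \cite{KMVZZ1} the rigidity step is not a pure virial argument but rather a reduction, via the minimal-counterexample machinery, to specific enemy scenarios (soliton-like and low-to-high frequency cascade), which are then excluded; the compactness is modulo scaling only, and the potential term contributes with a favorable sign in the localized virial because it is repulsive after the Hardy shift.
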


In view of scattering properties and the linear dispersive decay, it is natural to explore whether the solutions to NLS also enjoy dispersive decay. Specifically, for the equation in \eqref{equation}, dispersive decay estimate was demonstrated for initial data
with $H^{1+}(\R^3)$ regularity in \cite{MWYZ}, where Ma et al. primarily focused on the setting of hyperbolic
space $\mathbb{H}^3$ and then discussed the extension to $\R^3$. This work improved upon the previous results, which required initial data to belong to $H^3(\R^3)$
regularity, see \cite{FZ,GHS}. In a later contribution, Fan et al. in \cite{FKVZ} calculated dispersive decay for the mass-critical nonlinear Schr\"odinger equation with initial data in the scaling-critical space $L^2$. Based on the method developed in \cite{FKVZ}, Kowalski \cite{K} lowered the required regularity to the scaling-critical spaces $\dot{H}^1$ for energy-critical NLS with the spatial dimensions $n=3,4$.
Inspired by \cite{K,KMVZZ1}, we investigate the dispersive decay estimate for the energy-critical defocusing nonlinear Schr\"odinger equation with the inverse-square potential in three
spatial dimensions, assuming initial-value in ${ \dot{H}}^{1}(\R^3)$.\vspace{0.2cm}

More precisely, we acquire
\begin{theorem}\label{decay-estimate}
Let  $a\geq0$
and suppose $u$ solves \eqref{equation} with initial data $u_{0}\in\dot{ H}^{1}(\R^3)\cap L^{p'}(\R^3)$. Then, there exists a constant $C(\|u_{0}\|_{\dot{H}^1}, p)$ depending on $u_{0}$, such that
\begin{equation}
\|u(t,x)\|_{L_{x}^{p}}\leq C(\|u_{0}\|_{\dot{H}^1}, p) |t|^{-3(\frac{1}{2}-\frac{1}{p})}\|u_{0}\|_{L^{p'}_x},\quad 2<p<\infty
\end{equation}
uniformly for $t\neq0$.
\end{theorem}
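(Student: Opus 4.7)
The plan is to combine a Duhamel expansion with a bootstrap argument in the spirit of \cite{K, FKVZ}. The starting point is the Duhamel formula
\begin{equation*}
u(t) = e^{-it\LL_a} u_0 - i \int_0^t e^{-i(t-s)\LL_a} (|u|^4 u)(s) \, ds.
\end{equation*}
Since $a \geq 0$, conservation of energy together with the sharp Hardy inequality (which gives $\|\sqrt{\LL_a} u\|_{L^2_x} \sim \|\nabla u\|_{L^2_x}$) yields the uniform bound $\sup_t \|u(t)\|_{\dot{H}^1} \lesssim \|u_0\|_{\dot{H}^1}$. Combined with the global spacetime bound \eqref{spacetime-bound} furnished by Theorem \ref{scatter}, this places $u \in L^{10}_{t,x}(\R \times \R^3)$, so $\R$ can be partitioned into finitely many intervals $I_j$ on which $\|u\|_{L^{10}_{t,x}(I_j \times \R^3)} \leq \eta$ for a small parameter $\eta$ to be chosen.

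Next I would record (for $a \geq 0$) the pointwise dispersive estimate
\begin{equation*}
\|e^{-it\LL_a} f\|_{L^p_x} \lesssim |t|^{-3(\frac12-\frac1p)} \|f\|_{L^{p'}_x}, \qquad 2 \leq p \leq \infty,
\end{equation*}
together with its refinement in Lorentz spaces and the corresponding Strichartz / Lorentz-Strichartz inequalities. Setting
\begin{equation*}
M(T) := \sup_{0 < |t| \leq T} |t|^{3(\frac12-\frac1p)} \|u(t)\|_{L^p_x},
\end{equation*}
the linear part of Duhamel immediately contributes $\lesssim \|u_0\|_{L^{p'}_x}$. For the nonlinear term I would split $[0,t]$ at $s = t/2$: on $[0,t/2]$, where $|t-s| \sim t$, the dispersive estimate produces the desired power of $t$ and reduces the task to estimating $\||u|^4 u\|$ in a suitable spacetime norm via Strichartz and the uniform $\dot{H}^1$ bound; on $[t/2,t]$ one inserts the bootstrap hypothesis $\|u(s)\|_{L^p_x} \lesssim s^{-3(\frac12-\frac1p)} M(T)$ and exchanges the remaining four factors of $u$ via H\"older and Sobolev embedding.

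The principal obstacle is that as $p \to \infty$ the decay exponent $\alpha := 3(\frac12 - \frac1p)$ approaches $3/2$, so the kernel $|t-s|^{-\alpha}$ fails to be integrable in $s$ whenever $p \geq 6$, and a naive Gronwall-type iteration collapses. Following \cite{K}, the remedy is to work in Lorentz spaces in time, replacing $L^1_s$ by the weak-type space $L^{1/\alpha,\infty}_s$ (the natural home of $|t-s|^{-\alpha}$) and dualising against the Lorentz-Strichartz estimates; the quintic nonlinearity is then handled by the fractional product and chain rules, together with the equivalence $\|\sqrt{\LL_a}\, \cdot\, \|_{L^r_x} \sim \|\nabla \,\cdot\, \|_{L^r_x}$ (valid for $a \geq 0$ in the full range of $r$ needed here), which lets one transfer derivatives between $\sqrt{\LL_a}$ and $\nabla$ at will. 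Carrying this out on a single subinterval $I_j$ produces an inequality of the shape
\begin{equation*}
M(T) \leq C(\|u_0\|_{\dot{H}^1}) \|u_0\|_{L^{p'}_x} + C \eta^{\theta} M(T)
\end{equation*}
for some $\theta > 0$; choosing $\eta$ sufficiently small absorbs the second term, and a finite iteration across all intervals $I_j$ closes the bootstrap and yields the claimed decay estimate uniformly in $t$.
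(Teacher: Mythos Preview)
Your overall architecture---Duhamel, the bootstrap quantity $M(T)$, the split at $s=t/2$, and partitioning time into intervals where a Strichartz-type norm is small---matches the paper. But two steps in your sketch do not actually close. First, on $[0,t/2]$ you propose to control $\||u|^4u\|$ purely by ``Strichartz and the uniform $\dot H^1$ bound''. That produces a contribution $C(\|u_0\|_{\dot H^1})\,|t|^{-\alpha}$ carrying \emph{no} factor of $\|u_0\|_{L^{p'}}$, so the final inequality becomes $M(T)\lesssim \|u_0\|_{L^{p'}}+C(\|u_0\|_{\dot H^1})+\eta^\theta M(T)$, which does not give the linear dependence on $\|u_0\|_{L^{p'}}$ the theorem asserts. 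For $2<p<6$ the paper fixes this by inserting the bootstrap factor on \emph{both} halves: one writes $\|(|u|^4u)(s)\|_{L^{p'}_x}\le \|u(s)\|_{L^p_x}\|u(s)\|_{L^{4p/(p-2)}_x}^4\le |s|^{-\alpha}\|u\|_{X(s)}\|u(s)\|_{L^{4p/(p-2)}_x}^4$, and on $[0,t/2]$ the kernel supplies $|t|^{-\alpha}$ while $|s|^{-\alpha}\in L_s^{2p/(3(p-2)),\infty}$ is paired by Lorentz--H\"older against the four remaining copies.

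Second, your stated remedy for $p\ge 6$---place $|t-s|^{-\alpha}$ in $L_s^{1/\alpha,\infty}$ and invoke Young--O'Neil---fails outright: for $p\ge 6$ one has $\alpha=3(\tfrac12-\tfrac1p)\ge 1$, so $1/\alpha\le 1$ lies outside the range where the Young--O'Neil inequality applies. The paper therefore treats $6\le p<\infty$ as a genuinely separate case. On $[0,t/2]$ it \emph{uses the already-proved case $p=3$}: writing $\|(|u|^4u)(s)\|_{L^{p'}_x}\le \|u(s)\|_{L^3_x}^{(5p-6)/(3p)}\|u(s)\|_{L^{6(5p+3)/(4p-3)}_x}^{(10p+6)/(3p)}$, the first factor decays like $|s|^{-1/2}\|u_0\|_{L^{3/2}}$ by the $p=3$ result, and interpolating $\|u_0\|_{L^{3/2}}$ between $L^{p'}$ and $L^6$ recovers exactly one power of $\|u_0\|_{L^{p'}}$. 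On $[t/2,t)$ the paper applies a Sobolev embedding $\|f\|_{L^p_x}\lesssim \|\mathcal L_a^{\frac12(1-\frac3{2p})}f\|_{L^{6p/(2p+3)}_x}$ \emph{before} using dispersive decay, which replaces the non-integrable exponent $\alpha$ by $(p-3)/(2p)<1$ and allows the Lorentz-in-time argument to close. Both of these ingredients are missing from your sketch; Lorentz spaces alone are not the cure in the range $p\ge 6$.
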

\begin{remark}
Although the global well-posedness and scattering theory are known to hold for $a>-\frac{1}{4}+\frac{1}{25}$ as stated in Theorem \ref{scatter}, the validity of Theorem \ref{decay-estimate} can currently only be confirmed for $a\geq0$, owing to the limitations imposed by the fractional product rule. Specifically, Theorem \ref{decay-estimate} for $p=\infty$ has been established in \cite{K} when $a=0$, which corresponds to the classical Laplacian operator. However, for $a>0$, Theorem \ref{decay-estimate} with $p=\infty$ does not hold because the endpoint Sobolev embedding is not valid in this case.
\end{remark}
To prove the dispersive estimates for nonlinear Schr\"odinger equation with the inverse-square operator, we follow the argument of Kowalski in \cite{K}. Thus,
one of our primary objective is to establish the Lorentz-Strichartz estimate for the nonlinear Schr\"odinger equation \eqref{equation}. To this end, we have to address the challenges posed by the presence of the inverse-square potential and demonstrate the $\dot{H}^1$ uniform regularity of solution to equation \eqref{equation}.
It's worth noting that the space-time bound \eqref{spacetime-bound} ensures that the solutions scatter, meaning that the solutions become asymptotic to linear solutions as $t\rightarrow \pm \infty$ in the sense of \eqref{spacetime-asy}.
This scattering property, together with the equivalence of Sobolev norms and the fractional product rule, facilitates the achievement of $\dot{H}^1$ uniform regularity using the method established in \cite{CKSTT}. On the other hand, in order to accomplish Theorem \ref{decay-estimate}, we divide the proof of Theorem \ref{decay-estimate} into two different cases: $2<p<6$ and $6\leq p<\infty$, based on the integrability of time at $t=0$. Furthermore, the bootstrap argument and Sobolev embedding play an important role in these cases. However, we are unable to obtain the case $p=\infty$ due to the failure of endpoint Sobolev embedding.\vspace{0.2cm}

The paper is organized as follows. In Section \ref{Preliminaries}, as a preliminaries, we introduce some basic notations and recall related important results, including the time-decay estimates, Sobolev space, Littlewood-Paley theory, Strichartz estimates and key properties of Lorentz space. Section \ref{Lemma}
is devoted to showing several vital lemmas that are essential for the proof of Theorem \ref{decay-estimate}, including the uniform regularity of solution to equation \eqref{equation} and Lorentz-Strichartz estimates. Finally, in Section \ref{proof}, we apply the lemmas established in Section \ref{Lemma} to prove Theorem \ref{decay-estimate}.

\section{Preliminaries}\label{Preliminaries}In this preliminary section, we first introduce the notations that will be utilized throughout this paper. Additionly, we present several essential tools that form the foundation for proving Theorem \ref{decay-estimate}, including the time decay estimates, Littlewood-Paley theorem associated with the inverse-square operator $\mathcal{L}_a$, and key properties of the Lorentz space.
\subsection{Notations}We start by introducing some notations that will be employed in
this article. Let $X$ and $Y$ denote non-negative quantities, we write $X\lesssim Y$
to indicate $X\leq CY$ for constant $C$, and when the constant $C$ depends on a parameter $b$, we write $X\lesssim_{b} Y$. Additionally, $C(B)$ is used to represent a constant that may depends on $B$. We also denote $L^{p,q}(\R^3)$ as the Lorentz space and in the case where $p = q$, $L^{p,p}(\R^3)$ coincides with the standard Lebesgue space $L^p(\R^3)$. We adopt the convention that $L^{\infty,\infty}=L^{\infty}$, and we leave $L^{\infty,q}$ undefined for $q<\infty$. We utilize the notation $L_t^q L_x^r$ to denote the space-time norm defined as
\begin{align*}
\|u\|_{L_t^q L_x^r\left(\mathbb{R} \times \mathbb{R}^3\right)}:=\left(\int_{\mathbb{R}}\big(\int_{\mathbb{R}^3}|u(t, x)|^r d x\big)^{q / r} d t\right)^{1 / q}
\end{align*}
with the usual modifications when $q$ or $r$ is equal to infinity, or when the domain $\mathbb{R} \times \mathbb{R}^3$ is replaced by a smaller region of space-time, such as $I \times \mathbb{R}^3$ where $I\subset\R$. In the special case where $q=r$, we abbreviate $L_t^q L_x^q$ as $L_{t, x}^q$. For the sake of notation simplicity, we usually omit the intervals $\mathbb{R} \times \mathbb{R}^3$ and $I \times \mathbb{R}^3$ in space-time norm.

Let $I \times \mathbb{R}^3$ be a space-time slab. Based on the Strichartz estimates, we define the $L^2$ Strichartz norm $\dot{S}^0\left(I \times \mathbb{R}^3\right)$ by
\begin{equation}\label{normStrichartz}
\|u\|_{\dot{S}^0\left(I \times \mathbb{R}^3\right)}:=\sup _{(q, r) \text { admissible }}\left(\sum_N\big\|P_N^a u\big\|_{L_t^q L_x^r(I \times \mathbb{R}^3)}^2\right)^{\frac{1}{2}},
\end{equation}
and for $k=1$ we then denote the $\dot{H}^1$ Strichartz norm $\dot{S}^1\left(I \times \mathbb{R}^3\right)$ by
\begin{equation}
\|u\|_{\dot{S}^1\left(I \times \mathbb{R}^3\right)}:=\left\|\mathcal{L}_a^{\frac{1}{2}} u\right\|_{\dot{S}^0\left(I \times \mathbb{R}^3\right)}.
\end{equation}
The inclusion of the Littlewood-Paley projections in \eqref{normStrichartz} may initially appear unconventional. However, they are essential for deriving the key $L_{t}^4L_x^{\infty}$ endpoint Strichartz estimate, as demonstrated in Lemma \ref{strichartz-norm} below.

\subsection{Time decay estimates}In this subsection, we record the time decay estimates for Schr\"odinger operator with the inverse-square potential which is crucial for our proof of Theorem \ref{decay-estimate}.
We present here only the particular cases that will be needed in this paper (see \cite[Theorem 1.11]{FFFP}).
\begin{lemma}[Time decay for inverse-square potential \cite{FFFP} ]\label{time-decay} If $a \geq 0$ and $n=3$, then the following estimates hold:
$$
\left\|e^{i t \mathcal{L}_a} f(\cdot)\right\|_{L^p} \leq \frac{C}{|t|^{3\left(\frac{1}{2}-\frac{1}{p}\right)}}\|f\|_{L^{p^{\prime}}}, \quad p \in[2,+\infty], \quad \frac{1}{p}+\frac{1}{p^{\prime}}=1
$$
for some constant $C=C(a, p)>0$ which does not depend on $t$ and $f$.
\end{lemma}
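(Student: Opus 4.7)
The plan is to treat the result as an interpolation statement anchored at the two endpoints $p=2$ and $p=\infty$. First, for $p=2$, since $\mathcal{L}_a$ is realised as the Friedrichs extension of the positive quadratic form $Q$ on $C_c^\infty(\R^3\setminus\{0\})$, it is self-adjoint; the functional calculus then gives that $e^{it\mathcal{L}_a}$ is unitary, so $\|e^{it\mathcal{L}_a}f\|_{L^2}=\|f\|_{L^2}$. The whole content of the lemma thus lies in establishing the endpoint dispersive bound
$$
\|e^{it\mathcal{L}_a}f\|_{L^\infty}\leq C|t|^{-3/2}\|f\|_{L^1},
$$
after which Riesz--Thorin interpolation with the $L^2$ identity delivers the full range $p\in[2,\infty]$ with the claimed exponent $3(\tfrac12-\tfrac1p)$.

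To prove the $L^1\to L^\infty$ bound, I would reduce it to a pointwise estimate on the Schwartz kernel $K_t(x,y)$ of $e^{it\mathcal{L}_a}$. The key structural fact is that $\mathcal{L}_a$ commutes with rotations, so decomposing $L^2(\R^3)=\bigoplus_{\ell\geq0}\mathcal{H}_\ell$ into spherical harmonic subspaces diagonalises the angular variable. On $\mathcal{H}_\ell$ the operator reduces to the one-dimensional radial operator
$$
-\partial_r^2-\frac{2}{r}\partial_r+\frac{\ell(\ell+1)+a}{r^2},
$$
which, after conjugation by $r$, is a Bessel operator of order $\nu_\ell=\sqrt{(\ell+\tfrac12)^2+a}$. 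The spectral resolution (Hankel transform) then produces an explicit formula for the kernel on each subspace involving $J_{\nu_\ell}(r\sqrt{\lambda})J_{\nu_\ell}(\rho\sqrt{\lambda})e^{-it\lambda}$, and summing over $\ell$ with the addition theorem for spherical harmonics yields a global integral representation for $K_t(x,y)$.

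The dispersive bound then follows from uniform stationary-phase/oscillatory-integral estimates on these Bessel integrals, exactly as in the free case $a=0$ where one recovers the standard Gaussian kernel with $|K_t(x,y)|=(4\pi|t|)^{-3/2}$. The main obstacle is obtaining bounds that are uniform in $\ell$: one needs Bessel-function asymptotics and oscillatory estimates that do not degenerate as $\nu_\ell\to\infty$. This is precisely where the hypothesis $a\geq0$ enters decisively: it ensures $\nu_\ell\geq\ell+\tfrac12$ is real and no smaller than in the free case, so every subspace inherits at least as much oscillation/decay as it would for the Laplacian, and the estimates transfer cleanly. For $-\tfrac14<a<0$ the indices $\nu_\ell$ shift downward and the $p=\infty$ endpoint actually fails (consistent with the counterexamples in \cite{FFFP,FFFP1}), which is why the statement is restricted to $a\geq0$. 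Once the pointwise kernel bound $|K_t(x,y)|\lesssim|t|^{-3/2}$ is in hand, the lemma follows by the two-line interpolation described above.
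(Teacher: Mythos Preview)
The paper does not supply a proof of this lemma at all: it is quoted verbatim as a special case of \cite[Theorem~1.11]{FFFP} and used as a black box throughout. So there is no ``paper's own proof'' to compare against; your proposal is filling in what the authors deliberately outsourced.

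That said, your outline is faithful to the strategy of the cited reference. The reduction to the endpoint $L^1\to L^\infty$ bound via unitarity and Riesz--Thorin is exactly right, and the spherical-harmonic decomposition of $\mathcal{L}_a$ into radial Bessel-type operators with indices $\nu_\ell=\sqrt{(\ell+\tfrac12)^2+a}$ is precisely how \cite{FFFP} proceeds. Your explanation of why $a\geq 0$ is needed (it keeps $\nu_\ell\geq \ell+\tfrac12$, so each angular sector is no worse than the free case) also matches the mechanism in that paper, and is consistent with the remark following the lemma that the $p=\infty$ endpoint genuinely fails for $-\tfrac14<a<0$.

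The one place your sketch is thin is the phrase ``uniform stationary-phase/oscillatory-integral estimates on these Bessel integrals.'' This is the entire technical content of \cite{FFFP}: the kernel on each subspace is a Weber--Schafheitlin-type integral, and getting a bound on the sum over $\ell$ that is uniform in $x,y$ and yields exactly $|t|^{-3/2}$ requires delicate Bessel asymptotics together with an explicit evaluation (in \cite{FFFP} this passes through a representation in terms of Gegenbauer/Jacobi polynomials and known generating-function identities, not a bare stationary-phase argument). Your plan correctly locates the difficulty but does not indicate how to overcome it; for a self-contained proof you would need to import those computations from \cite{FFFP} rather than invoke stationary phase generically.
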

\begin{remark}
Specially, Fanelli, Felli, Fontelos, and Primo obtained the dispersive estimates for propagator $e^{it\mathcal{L}_a}$ with $n=2,3$ in \cite{FFFP,FFFP1}. However, Miao, Su, and Zheng \cite{MSZ} extended dispersive estimates for propagator $e^{it\mathcal{L}_a}$ by proving the $W^{s,p}$-boundedness for stationary
wave operators associated with the Schr\"odinger operator with the inverse-square potential
\begin{align*}
\mathcal{L}_a=-\Delta+\frac{a}{|x|^2},\quad a\geq-\frac{(n-2)^2}{4},
\end{align*}
in dimensions $n\geq2$, as well as dispersive estimates for the wave propagator $\frac{\sin(t\sqrt{\mathcal{L}_a})}{\mathcal{L}_a}$ in all dimensions $n\geq 2$. These results represent new contributions to this field.
\end{remark}

\subsection{Sobolev spaces} Several basic harmonic analysis tools related to the operator $\mathcal{L}_{a}$ were developed in \cite{KMVZZ}, and the purpose of this subsection is to recall the equivalence of Sobolev spaces and the fractional product rule. With this in mind, we introduce Sobolev spaces adapted to the Schr\"odinger operator with the inverse-square potential, which serve as fundamental tools in harmonic analysis.

For $1<r<\infty$, we define the homogeneous and
inhomogeneous Sobolev spaces connected with the inverse-square potential operator $\mathcal{L}_a$ as $\dot{H}_{a}^{1,r}(\R^3)$ and $H_{a}^{1,r}(\R^3)$ respectively, which are equipped with the following norms
\begin{equation}\label{define-norm}
\|f\|_{\dot{H}_{a}^{1,r}(\R^3)}=\|\sqrt{\mathcal{L}_a}f\|_{L^r(\R^3)},
\end{equation}
and
\begin{equation}\label{define-norm1}
\|f\|_{H_{a}^{1,r}(\R^3)}=\|\sqrt{1+\mathcal{L}_a}f\|_{L^r(\R^3)}.
\end{equation}
For the special case when $r=2$, we simply the notation and write $\dot{H}_{a}^{1}(\R^3)=\dot{H}_{a}^{1,2}(\R^3)$ and $H_{a}^{1}(\R^3)=H_{a}^{1,2}(\R^3)$.

Estimates for equivalence of Sobolev spaces related to the operator $\mathcal{L}_a$ were establish in \cite[Theorem 1.2]{KMVZZ}, which provide conditions under which Sobolev spaces defined
through powers of $\mathcal{L}_a$ coincide with the classical Sobolev spaces.
\begin{lemma}[Equivalence of Sobolev spaces \cite{KMVZZ}] \label{Sobolev-norm} Let $n \geq 3, a \geq-\left(\frac{n-2}{2}\right)^2$, $\sigma=\frac{1}{2}-\sqrt{\frac{1}{4}+a}$, and $0<s<2$. If $1<p<\infty$ satisfies $\frac{s+\sigma}{n}<\frac{1}{p}<\min \left\{1, \frac{n-\sigma}{n}\right\}$, then
$$
\left\|(-\Delta)^{\frac{s}{2}} f\right\|_{L^p} \lesssim_{d, p, s}\left\|\mathcal{L}_a^{\frac{s}{2}} f\right\|_{L^p}, \quad \text { for all } f \in C_c^{\infty}(\mathbb{R}^n \backslash\{0\}).
$$
If $\max \left\{\frac{s}{n}, \frac{\sigma}{n}\right\}<\frac{1}{p}<\min \left\{1, \frac{n-\sigma}{n}\right\}$, which ensures already that $1<p<\infty$, then
$$
\left\|\mathcal{L}_a^{\frac{s}{2}} f\right\|_{L^p} \lesssim d, p, s\left\|(-\Delta)^{\frac{s}{2}} f\right\|_{L^p}, \quad \text { for all } f \in C_c^{\infty}(\mathbb{R}^n \backslash\{0\}).
$$
\end{lemma}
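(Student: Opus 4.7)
The plan is to compare the fractional powers $\mathcal{L}_a^{s/2}$ and $(-\Delta)^{s/2}$ through their associated heat semigroups. The starting point is sharp pointwise estimates on the heat kernel $p_t^a(x,y)$ of $e^{-t\mathcal{L}_a}$, which for $a \geq -((n-2)/2)^2$ is known to satisfy a two-sided Gaussian bound modulated by a weight of the form $(1 \wedge t^{1/2}/|x|)^\sigma (1 \wedge t^{1/2}/|y|)^\sigma$ relative to the free heat kernel $p_t(x,y)$. Such kernel comparison estimates (due to Liskevich--Sobol, Milman--Semenov, and related works) encode the precise way in which the singular potential modifies diffusion near the origin, and they single out the Hardy exponent $\sigma$ in the statement.

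Given these heat kernel bounds, I would use the subordination identity
\begin{equation*}
\mathcal{L}_a^{s/2} f = \frac{1}{|\Gamma(-s/2)|} \int_0^\infty \bigl(e^{-t\mathcal{L}_a} f - f\bigr)\, \frac{dt}{t^{1+s/2}},
\end{equation*}
together with the analogous formula for $(-\Delta)^{s/2}$. Subtracting the two representations reduces the problem to estimating an integral operator whose kernel is a time-weighted integral of $p_t^a - p_t$; the pointwise kernel comparison then bounds the difference by an operator of Riesz-potential type multiplied by the weight $(|x|/(|x|+|y|))^{-\sigma}$ (or its dual, depending on which direction of the inequality one is proving).

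The final step is a Stein--Weiss-type weighted $L^p$ estimate for
\begin{equation*}
f \mapsto \int_{\mathbb{R}^n} \frac{w_\sigma(x,y)}{|x-y|^{n-s}}\, f(y)\, dy,
\end{equation*}
whose boundedness on $L^p$ gives exactly the range asserted in the lemma: the lower constraint $\frac{s+\sigma}{n} < \frac{1}{p}$ ensures integrability of $|y|^{-\sigma}$ against $L^{p'}$ functions locally (paired with the Riesz kernel), while the upper bound $\frac{1}{p} < \frac{n-\sigma}{n}$ is the dual condition controlling the weight near $x = 0$. The reverse inequality forces the additional constraint $\frac{s}{n} < \frac{1}{p}$, which comes from applying the subordination formula for $(-\Delta)^{s/2}$ and bounding the free-to-$\mathcal{L}_a$ difference, where the standard Riesz potential must itself be $L^p$-controlled.

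The main obstacle is obtaining the sharp heat kernel comparison with the correct exponent $\sigma$; in particular, the behavior of $p_t^a(x,y)$ when $x$ or $y$ is close to the origin relative to $t^{1/2}$ is where the inverse-square singularity is felt, and capturing it with the right power $\sigma$ is precisely what produces the (otherwise mysterious) range restrictions on $p$. Once this pointwise bound is established, the remainder is a careful but standard weighted-inequality argument.
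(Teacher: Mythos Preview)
The paper does not prove this lemma at all: it is quoted verbatim as \cite[Theorem~1.2]{KMVZZ} and used as a black box. So there is no ``paper's own proof'' to compare your proposal against.

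That said, your sketch is essentially the strategy carried out in the original source \cite{KMVZZ}. There, the authors establish the boundedness of the intertwining operators $(-\Delta)^{s/2}\mathcal{L}_a^{-s/2}$ and $\mathcal{L}_a^{s/2}(-\Delta)^{-s/2}$ on $L^p$ precisely via the sharp two-sided heat-kernel bounds you describe (with the weight $(1\wedge \sqrt{t}/|x|)^\sigma(1\wedge\sqrt{t}/|y|)^\sigma$), combined with the subordination representation of negative fractional powers and a weighted Hardy--Littlewood--Sobolev/Stein--Weiss argument to handle the resulting singular integrals. The range conditions on $p$ emerge exactly as you indicate, from the integrability of the weight $|x|^{-\sigma}$ near the origin against the Riesz-type kernel. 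One small refinement: rather than subtracting the two subordination formulas directly, it is cleaner (and this is what \cite{KMVZZ} do) to write $(-\Delta)^{s/2}\mathcal{L}_a^{-s/2}$ as a single operator, express $\mathcal{L}_a^{-s/2}$ via $\int_0^\infty t^{s/2-1}e^{-t\mathcal{L}_a}\,dt$, and then bound the composite kernel pointwise using the heat-kernel estimate; this avoids having to control a signed difference of kernels.
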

Based on the equivalence of Sobolev spaces, we can derive the fractional product rule for inverse-square operator $\mathcal{L}_a$. The $L^p$-product rule for fractional derivatives with the Laplace operator in Euclidean spaces was initially acquired by Christ and Weinstein \cite{CW}. By combining their result with Lemma \ref{Sobolev-norm}, we can get a similar result for the operator $\mathcal{L}_a$.
The main result, Theorem \ref{decay-estimate}, is confined to the range $a\geq0$. The restriction $a\geq0$ ensures that $\sigma\leq0$, which in turn leads to the following equivalence for all $1< p < 3$
\begin{equation}
\|\nabla f\|_{L^p\left(\mathbb{R}^3\right)} \sim\left\|\sqrt{\mathcal{L}_a} f\right\|_{L^p\left(\mathbb{R}^3\right)}.
\end{equation}
For more general cases, we have
\begin{lemma}[Fractional product rule] \label{rule}
Fix $a\geq0$. Then for all $f, g \in$ $C_c^{\infty}\left(\mathbb{R}^3 \backslash\{0\}\right)$, we get
$$
\left\|\mathcal{L}_a^{\frac{s}{2}}(f g)\right\|_{L^p\left(\mathbb{R}^3\right)} \lesssim\left\|\mathcal{L}_a^{\frac{s}{2}} f\right\|_{L^{p_1}\left(\mathbb{R}^3\right)}\|g\|_{L^{p_2}\left(\mathbb{R}^3\right)}+\|f\|_{L^{q_1}\left(\mathbb{R}^3\right)}\left\|\mathcal{L}_a^{\frac{s}{2}} g\right\|_{L^{q_2}\left(\mathbb{R}^3\right)}
$$
for any exponents satisfying $1< p, p_1, q_2 <\frac{3}{s}$ and $\frac{1}{p}=\frac{1}{p_1}+\frac{1}{p_2}=\frac{1}{q_1}+\frac{1}{q_2}$.

In particular, for $s=1$, we have
$$
\left\|\sqrt{\mathcal{L}_a}(f g)\right\|_{L^p\left(\mathbb{R}^3\right)} \lesssim\left\|\sqrt{\mathcal{L}_a} f\right\|_{L^{p_1}\left(\mathbb{R}^3\right)}\|g\|_{L^{p_2}\left(\mathbb{R}^3\right)}+\|f\|_{L^{q_1}\left(\mathbb{R}^3\right)}\left\|\sqrt{\mathcal{L}_a} g\right\|_{L^{q_2}\left(\mathbb{R}^3\right)}
$$
for any exponents satisfying $1< p, p_1, q_2 <3$ and $\frac{1}{p}=\frac{1}{p_1}+\frac{1}{p_2}=\frac{1}{q_1}+\frac{1}{q_2}$.
\end{lemma}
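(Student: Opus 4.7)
The plan is to reduce the fractional product rule for $\mathcal{L}_a$ to the classical Christ–Weinstein fractional product rule for the Laplacian, using Lemma \ref{Sobolev-norm} as a bridge in both directions. The condition $a\geq 0$ is what makes this work cleanly, since it forces $\sigma=\tfrac12-\sqrt{\tfrac14+a}\leq 0$, so the exponent windows in Lemma \ref{Sobolev-norm} collapse to the single condition $\tfrac{s}{3}<\tfrac{1}{p}<1$, i.e.\ $1<p<\tfrac{3}{s}$.

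First I would apply Lemma \ref{Sobolev-norm} (the direction $\|\mathcal{L}_a^{s/2}F\|_{L^p}\lesssim\|(-\Delta)^{s/2}F\|_{L^p}$) to the product $F=fg\in C_c^\infty(\mathbb{R}^3\setminus\{0\})$, which is legitimate since the hypothesis $1<p<3/s$ together with $\sigma\leq 0$ satisfies $\max\{\tfrac{s}{3},\tfrac{\sigma}{3}\}<\tfrac{1}{p}<\min\{1,\tfrac{3-\sigma}{3}\}$. This yields
\[
\bigl\|\mathcal{L}_a^{s/2}(fg)\bigr\|_{L^p}\;\lesssim\;\bigl\|(-\Delta)^{s/2}(fg)\bigr\|_{L^p}.
\]
Second, I would invoke the Christ–Weinstein fractional product rule \cite{CW} for the Laplacian,
\[
\bigl\|(-\Delta)^{s/2}(fg)\bigr\|_{L^p}\;\lesssim\;\bigl\|(-\Delta)^{s/2}f\bigr\|_{L^{p_1}}\|g\|_{L^{p_2}}+\|f\|_{L^{q_1}}\bigl\|(-\Delta)^{s/2}g\bigr\|_{L^{q_2}},
\]
under the Hölder relations $\tfrac{1}{p}=\tfrac{1}{p_1}+\tfrac{1}{p_2}=\tfrac{1}{q_1}+\tfrac{1}{q_2}$ and the admissibility of the differentiated exponents $p_1,q_2\in(1,\infty)$, which is implied by the assumption $1<p_1,q_2<3/s$.

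Third, I would apply Lemma \ref{Sobolev-norm} in the reverse direction, $\|(-\Delta)^{s/2}h\|_{L^r}\lesssim\|\mathcal{L}_a^{s/2}h\|_{L^r}$, to each of the two terms on the right with $r=p_1$ and $r=q_2$. Since $\sigma\leq 0$, the required hypothesis $\tfrac{s+\sigma}{3}<\tfrac{1}{r}<\min\{1,\tfrac{3-\sigma}{3}\}$ is weaker than $\tfrac{s}{3}<\tfrac{1}{r}<1$, so it is again satisfied by the assumption $1<p_1,q_2<3/s$. Chaining the three displays then produces the claimed estimate. The case $s=1$ is immediate by specialization, with the exponent window becoming $1<p,p_1,q_2<3$.

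The main technical obstacle is simply the bookkeeping of exponent conditions: one must verify at each of the two applications of Lemma \ref{Sobolev-norm} that the hypotheses there are implied by the single hypothesis $1<p,p_1,q_2<3/s$ of the lemma. This is precisely where the sign hypothesis $a\geq 0$ (hence $\sigma\leq 0$) is used — it eliminates the lower threshold coming from $\sigma$ in Lemma \ref{Sobolev-norm} and makes every such verification automatic. Note also that no Sobolev equivalence is needed for the undifferentiated factors in $L^{p_2}$ and $L^{q_1}$; these only enter through Hölder, which explains why no hypothesis is imposed on $p_2$ or $q_1$ beyond the Hölder relations.
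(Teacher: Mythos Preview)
Your proposal is correct and is precisely the approach the paper takes: the paper does not give a separate proof but states just before the lemma that the result follows by ``combining [the Christ--Weinstein] result with Lemma \ref{Sobolev-norm},'' which is exactly your three-step reduction (equivalence on the product, classical product rule, equivalence back on each differentiated factor). Your exponent bookkeeping --- in particular the observation that $a\geq 0$ forces $\sigma\leq 0$ and hence both hypothesis windows in Lemma \ref{Sobolev-norm} collapse to $1<r<3/s$ --- is accurate and fills in the details the paper leaves implicit.
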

\subsection{Littlewood-Paley theory}
In this subsection, we recall the indispensable Littlewood-Paley theory, including Bernstein estimates obtained in \cite{KMVZZ}, specifically adapted to the inverse-square operator $\mathcal{L}_a$. The main result of this subsection is vital for developing the $\dot{H}^1$ uniform regularity for Schr\"odinger equation with the operator $\mathcal{L}_a$.

Let $\phi:[0, \infty) \rightarrow[0,1]$ be a smooth function such that
$$
\phi(\lambda)=1 \text { for } 0 \leq \lambda \leq 1 \text { and } \phi(\lambda)=0 \text { for } \lambda \geq 2 \text {. }
$$
For each dyadic number $N \in 2^{\mathbb{Z}}$, we define the functions
$$
\phi_N(\lambda):=\phi(\lambda / N) \text { and } \psi_N(\lambda):=\phi_N(\lambda)-\phi_{N / 2}(\lambda) \text {. }
$$
It is evident that the collection $\left\{\psi_N(\lambda)\right\}_{N \in 2^{\mathbb{Z}}}$ forms a partition of unity for $\lambda \in(0, \infty)$. We now define the Littlewood-Paley projections as follows:
$$
P_{\leq N}^a:=\phi_N\big(\sqrt{\mathcal{L}_a}\big), \quad P_N^a:=\psi_N\big(\sqrt{\mathcal{L}_a}\big), \quad \text { and } \quad P_{>N}^a:=I-P_{\leq N}^a .
$$
So, we can proceed with the analysis based on these projections. Then we have
\begin{lemma}[Bernstein estimates]\cite[Lemma 5.1]{KMVZZ} \label{Bernstein}
Let $1<p \leq q \leq \infty$ when $a \geq 0$ and let $r_0<p \leq q<$ $r_0^{\prime}=\frac{n}{\sigma}$ when $-\left(\frac{n-2}{2}\right)^2 \leq a<0$. Then we can obtain

(1) The operators $P_{\leq N}^a, P_N^a, \tilde{P}_N^a$ and $\tilde{P}_{\leq N}^a$ are bounded on $L^p$;

(2) $P_{\leq N}^a, P_N^a, \tilde{P}_N^a$ and $\tilde{P}_{\leq N}^a$ are bounded from $L^p$ to $L^q$ with norm $O\big(N^{\frac{n}{p}-\frac{n}{q}}\big)$;

(3) $N^s\left\|P_N^a f\right\|_{L^p\left(\mathbb{R}^n\right)} \sim \|\left(\mathcal{L}_a\right)^{\frac{s}{2}} P_N^a f \|_{L^p\left(\mathbb{R}^n\right)}$ for all $f \in C_c^{\infty}\left(\mathbb{R}^n\right)$ and all $s \in \mathbb{R}$.
\end{lemma}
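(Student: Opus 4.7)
The plan is to deduce all three assertions from uniform pointwise kernel estimates for the Littlewood--Paley multipliers $\psi_N(\sqrt{\mathcal{L}_a})$, which in turn rest on heat-kernel bounds for $e^{-t\mathcal{L}_a}$. When $a\ge 0$ the potential $a/|x|^2$ is non-negative, so Trotter--Kato domination of $e^{-t\mathcal{L}_a}$ by the free heat semigroup yields the Gaussian upper bound $|e^{-t\mathcal{L}_a}(x,y)| \lesssim t^{-n/2}\exp(-c|x-y|^2/t)$. When $-\tfrac{(n-2)^2}{4}\le a<0$, one uses instead the known refined kernel bound in which $e^{-t\mathcal{L}_a}(x,y)$ acquires the weights $(1\wedge \sqrt{t}/|x|)^{\sigma}(1\wedge \sqrt{t}/|y|)^{\sigma}$; these weights lie in $L^p_x$ only when $\sigma p<n$, and the condition $r_0<p\le q<r_0'$ is precisely what is required to keep the weighted kernel integrable in both slots.

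Next I would pass from the semigroup to the projector kernels. Writing $\psi_N(\lambda)=\tilde\psi(\lambda/N)$ with $\tilde\psi\in C_c^\infty((0,\infty))$ and representing $\tilde\psi(\sqrt{\mathcal{L}_a})$ as a Schwartz-weighted integral of $e^{-t\mathcal{L}_a}$ in $t$ via a Mellin/subordination identity, I would combine this representation with the heat kernel estimate to produce
\[
|K_N(x,y)| \lesssim \frac{N^n}{(1+N|x-y|)^M}\qquad\text{for every } M\in\N
\]
in the case $a\ge 0$, and an analogous weighted bound when $a<0$. Exactly the same computation applies to $P_{\le N}^a$ (using a $C_c^\infty([0,\infty))$ bump instead of one supported away from zero) and to the fattened projections $\tilde P_N^a$ and $\tilde P_{\le N}^a$.

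With the kernel bounds in hand, assertion (1) is an immediate application of Schur's test, since both $\sup_x\int|K_N(x,y)|\,dy$ and $\sup_y\int|K_N(x,y)|\,dx$ are uniformly bounded in $N$. Assertion (2) follows from Young's inequality: for exponents satisfying $1+\tfrac{1}{q}=\tfrac{1}{p}+\tfrac{1}{r}$ one has $\|P_N^a f\|_{L^q}\lesssim \sup_x\|K_N(x,\cdot)\|_{L^r}\|f\|_{L^p}$, and a direct integration of the kernel estimate gives $\sup_x\|K_N(x,\cdot)\|_{L^r}\lesssim N^{n/p-n/q}$. Assertion (3) is then a functional calculus remark: setting $\widetilde\psi(\lambda):=\lambda^s\psi(\lambda)$, one has $\mathcal{L}_a^{s/2}P_N^a = N^s\,\widetilde\psi(\sqrt{\mathcal{L}_a}/N)$, and since $\widetilde\psi$ is again a compactly supported bump away from zero the associated multiplier is $L^p$-bounded uniformly in $N$ by the same argument; dividing instead of multiplying yields the reverse inequality.

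The main obstacle I expect is the negative-$a$ regime: there the pointwise Gaussian bound fails and the singular weight $|x|^{-\sigma}$ has to be tracked through the Mellin representation, which is exactly what forces the restriction $r_0<p\le q<r_0'$ and prevents one from reaching the endpoints $p=1$ or $q=\infty$. In the range $a\ge 0$ the argument is genuinely routine once the Gaussian heat kernel bound is recorded.
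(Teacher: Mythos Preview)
The paper does not prove this lemma at all; it is quoted from \cite[Lemma~5.1]{KMVZZ} and used as a black box. Your outline---Gaussian (respectively weighted-Gaussian) heat-kernel bounds for $e^{-t\mathcal{L}_a}$, a subordination representation to transfer these to pointwise bounds on the multiplier kernels, then Schur/Young for (1)--(2) and the functional-calculus identity $\mathcal{L}_a^{s/2}P_N^a=N^s\,\widetilde\psi(\sqrt{\mathcal{L}_a}/N)$ for (3)---is exactly the argument carried out in that reference, so there is no methodological difference to report.

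One small slip worth correcting: for $a<0$ the heat-kernel weight is $\bigl(1\wedge |x|/\sqrt t\,\bigr)^{-\sigma}$ (equivalently $\bigl(1+\sqrt t/|x|\bigr)^{\sigma}$), which is \emph{singular} at the origin rather than decaying at infinity as your expression $(1\wedge\sqrt t/|x|)^{\sigma}$ would suggest. Your integrability condition $\sigma p<n$ is the right one, but it arises from local integrability of $|x|^{-\sigma p}$ near $x=0$, and this is what forces the restriction $r_0<p\le q<r_0'$ and excludes the endpoints.
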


\subsection{Strichartz estimates}
In this subsection, we review the standard Strichartz estimates for Schr\"odinger operator with the inverse-square potential in dimension $n=3$, and discuss their application to the regularity theory for non-linear Schr\"odinger equation.
\begin{definition}[Schr\"odinger-admissible]
For the spatial dimension $n=3$, we say that a pair $2\leq q,r\leq\infty$ is Schr\"odinger-admissible if $\frac{2}{q}+\frac{3}{r}=\frac{3}{2}$. We also say that $(q,r)$ is a non-endpoint Schr\"odinger-admissible pair if $2<p,q<\infty$.
Finally, we call that $(q,r)$ is Schr\"odinger-admissible with $s$ spatial derivatives if
$\frac{2}{q}+\frac{3}{r}+s=\frac{3}{2}$.
\end{definition}
Strichartz estimates for the propagator $e^{it\mathcal{L}_a}$ in $\R^3$ were established by Burq et al. in \cite{BPST1}. Combining these with the Christ-Kiselev
lemma \cite{CK} yields the following Strichartz estimates.
\begin{theorem}[Strichartz estimate \cite{BPST1}] Fix $a>-\frac{1}{4}$. The solution $u$ to nonlinear Schr\"odinger equation with the inverse-square potential
\begin{equation}
i\partial_{t}u-\mathcal{L}_{a}u=F
\end{equation}
on an interval $I\ni t_0$ obeys
\begin{equation}
\|u\|_{L_{t}^{q} L_{x}^{r}(I\times\R^3)}\lesssim \|u(t_{0})\|_{L^2(\R^3)}+\|F\|_{L_{t}^{\bar{q}'}L_{x}^{\bar{r}'}(I\times\R^3)},
\end{equation}
whenever $\frac{2}{q}+\frac{3}{r}=\frac{2}{\bar{q}}+\frac{3}{\bar{r}}=\frac{3}{2}$, $2\leq q,\bar{q}\leq\infty$, and $q\neq \bar{q}'$.
\end{theorem}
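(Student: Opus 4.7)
The plan is to reduce the inhomogeneous Strichartz inequality to the homogeneous one for the free propagator $e^{it\mathcal{L}_a}$, and then derive the homogeneous bound from a dispersive-plus-energy input via the Keel--Tao abstract machinery. Concretely, the starting point is the conservation $\|e^{it\mathcal{L}_a}f\|_{L^2}=\|f\|_{L^2}$, which follows because $\mathcal{L}_a$ is self-adjoint (positive semi-definite for $a>-\tfrac14$), together with a time-decay bound
\[
\|e^{it\mathcal{L}_a}f\|_{L^\infty(\mathbb{R}^3)}\lesssim |t|^{-3/2}\|f\|_{L^1(\mathbb{R}^3)}.
\]
In the regime $a\geq 0$ this is exactly Lemma \ref{time-decay}; for the remaining range $-\tfrac14<a<0$ the pointwise $L^1\to L^\infty$ bound fails in general, and one must instead appeal to the weighted/microlocalized dispersive estimates of Burq--Planchon--Stalker--Tahvildar-Zadeh together with a spectral decomposition adapted to $\mathcal{L}_a$, which still produce the required $TT^*$ input in the form needed by the Keel--Tao argument.

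Given this input, I would apply the $TT^*$ formalism: writing $T f=e^{it\mathcal{L}_a}f$ one has $TT^* F(t)=\int e^{i(t-s)\mathcal{L}_a}F(s)\,ds$, and combining the energy and dispersive estimates gives the untruncated $TT^*$ operator norm control on all admissible pairs by interpolation. From this one concludes the homogeneous Strichartz estimate
\[
\|e^{it\mathcal{L}_a}u_0\|_{L^q_tL^r_x}\lesssim \|u_0\|_{L^2}
\]
for every Schrödinger-admissible pair $(q,r)$ with $\tfrac{2}{q}+\tfrac{3}{r}=\tfrac{3}{2}$, $2\leq q\leq\infty$. Duality then yields the dual estimate $\|\int e^{-is\mathcal{L}_a}F(s)\,ds\|_{L^2}\lesssim\|F\|_{L^{\bar q'}_tL^{\bar r'}_x}$.

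To pass from the homogeneous and dual estimates to the stated inhomogeneous one, I would use the Duhamel formula $u(t)=e^{-i(t-t_0)\mathcal{L}_a}u(t_0)-i\int_{t_0}^t e^{-i(t-s)\mathcal{L}_a}F(s)\,ds$, bound the first term by the homogeneous Strichartz estimate, and handle the retarded integral by composing the homogeneous estimate on the left with the dual estimate on the right. The composition $\int_\mathbb{R}$ is immediate from $TT^*$; replacing it by the retarded $\int_{t_0}^t$ is exactly where the Christ--Kiselev lemma enters, and this lemma is applicable precisely under the hypothesis $q\neq\bar q'$ appearing in the statement, because Christ--Kiselev requires strict inequality between the outer exponents. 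The endpoint case $(q,r)=(2,6)$ (together with the corresponding dual) is added at the end by invoking the Keel--Tao endpoint argument directly.

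The main obstacle is the low-$a$ range $-\tfrac14<a<0$: the failure of the standard pointwise dispersive estimate means the Keel--Tao input is not literally available, and one has to substitute the Burq--Planchon--Stalker--Tahvildar-Zadeh weighted dispersive bounds, checking that they still feed the bilinear $TT^*$ interpolation in the scale of admissible pairs. Everything else — the homogeneous Keel--Tao argument, duality, Duhamel, and Christ--Kiselev — is structural once the dispersive input is in place, and the restriction $q\neq \bar q'$ is a direct trace of the Christ--Kiselev hypothesis.
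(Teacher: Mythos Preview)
Your proposal is correct and matches the paper's treatment: the paper does not actually prove this theorem but simply records that the homogeneous Strichartz estimates for $e^{it\mathcal{L}_a}$ are due to Burq--Planchon--Stalker--Tahvildar-Zadeh \cite{BPST1} and that combining these with the Christ--Kiselev lemma \cite{CK} yields the stated inhomogeneous estimate with the restriction $q\neq\bar q'$. Your sketch supplies exactly these two ingredients, correctly flags that the naive $L^1\to L^\infty$ dispersive route only works for $a\geq 0$ while the range $-\tfrac14<a<0$ requires the alternative BPST input, and correctly identifies the Christ--Kiselev hypothesis as the source of the $q\neq\bar q'$ condition.
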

It is well-known that Strichartz estimates, along with Theorem \ref{scatter}, indicate the analogous space-time bounds for inverse-square operator $\mathcal{L}_a$
\begin{equation}
\|\sqrt{\mathcal{L}_a} u\|_{L_t^q L_x^r\left(I \times \mathbb{R}^3\right)} \lesssim\|u\|_{\dot{S}^1\left(I \times \mathbb{R}^3\right)},
\end{equation}
where $(q,r)$ denotes the Schr\"odinger-admissible pair.

Indeed, we observe that the basic inequality
\begin{align}\label{inequalilty-ele}
\left\|\left(\sum_N\left|f_N^a\right|^2\right)^{1 / 2}\right\|_{L_t^q L_x^r\left(I \times \mathbb{R}^3\right)} \leq\left(\sum_N\left\|f_N^a\right\|_{L_t^q L_x^r\left(I \times \mathbb{R}^3\right)}^2\right)^{1 / 2}
\end{align}
holds for all $2 \leq q, r \leq \infty$ and arbitrary functions $f_N$. Specifically, \eqref{inequalilty-ele} is true for all admissible exponents $(q,r)$, then we can derive
$$
\begin{aligned}
\|u\|_{L_t^q L_x^r\left(I \times \mathbb{R}^3\right)} & \lesssim\left\|\Big(\sum_N\left|P_N^a u\right|^2\Big)^{1 / 2}\right\|_{L_t^q L_x^r\left(I \times \mathbb{R}^3\right)} \\
& \lesssim\left(\sum_N\left\|P_N^a u\right\|_{L_t^q L_x^r\left(I \times \mathbb{R}^3\right)}^2\right)^{1 / 2} \\
& \lesssim\|u\|_{\dot{S}^0\left(I \times \mathbb{R}^3\right)},
\end{aligned}
$$
where we utilize the square function estimates in the first inequality above, see \cite[Theorem 5.3]{KMVZZ}.
Finally, we can acquire
\begin{equation}\label{strichartz}
\|\sqrt{\mathcal{L}_a} u\|_{L_t^q L_x^r\left(I \times \mathbb{R}^3\right)} \lesssim\|u\|_{\dot{S}^1\left(I \times \mathbb{R}^3\right)}.
\end{equation}
\subsection{Some properties of Lorentz space}\label{Lorentz-pro}In the process of proof for Theorem \ref{decay-estimate}, it will be crucial to apply Lorentz refinements to standard inequalities and techniques. This subsection focuses on recalling the relevant properties of Lorentz spaces that will be utilized in our analysis. For a comprehensive discussion of Lorentz spaces, we refer the reader to \cite{G}.

We begin by stating the Hunt interpolation, which establishes a relationship between Lorentz spaces and Lebesgue spaces.
\begin{lemma}[Hunt's interpolation] Suppose $1\leq p_1,p_2,q_1,q_2\leq\infty$ such that $p_1\neq p_2$ and $q_1\neq q_2$. Let $T$ be a sublinear operator that satisfies
\begin{equation}
\|Tf\|_{L^{p_i}}\lesssim_{p_i,q_i}\|f\|_{L^{q_i}}
\end{equation}
for $i=1,2$. Then for all $\theta\in(0,1)$ and all $0<r\leq\infty$,
\begin{equation}
\|Tf\|_{L^{p_{\theta},r}}\lesssim_{p_{\theta},q_{\theta},r}\|f\|_{L^{q_{\theta},r}},
\end{equation}
where $\frac{1}{p_{\theta}}=\frac{\theta}{p_1}+\frac{1-\theta}{p_2}$ and $\frac{1}{q_{\theta}}=\frac{\theta}{q_1}+\frac{1-\theta}{q_2}$.
\end{lemma}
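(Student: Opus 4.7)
The plan is to deduce this from the Marcinkiewicz-type interpolation theorem for Lorentz spaces, or equivalently from the real interpolation method. First I would observe that since $L^{p_i} = L^{p_i,p_i}$ and $L^{q_i} = L^{q_i,q_i}$, Chebyshev's inequality upgrades the strong-type hypothesis into the weak-type bounds
\[
\|Tf\|_{L^{p_i,\infty}} \lesssim \|f\|_{L^{q_i}}, \qquad i = 1,2.
\]
From here there are two equivalent routes. One is to invoke the identification $(L^{q_1}, L^{q_2})_{\theta,r} = L^{q_\theta,r}$ (and similarly for the target) provided by the real interpolation method with the Lions--Peetre $K$-functional, together with the fact that sublinear operators satisfying the endpoint bounds respect real interpolation spaces. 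The other is to run the classical Marcinkiewicz argument directly.

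For the direct route, assume without loss of generality $p_1 < p_2$ and $q_1 < q_2$, and let $f \in L^{q_\theta,r}$. For each level $\sigma > 0$, I would decompose $f = f^{\sigma} + f_{\sigma}$ where $f^{\sigma} := f \cdot \chi_{\{|f| > \gamma(\sigma)\}}$ for a threshold $\gamma(\sigma) = A\sigma^{\alpha}$ whose exponent $\alpha$ is chosen so that the $L^{q_1}$-norm of $f^{\sigma}$ and the $L^{q_2}$-norm of $f_{\sigma}$ feed into the endpoint hypotheses to produce, via sublinearity of $T$ and Chebyshev, an estimate for the distribution function $d_{Tf}(\sigma)$ whose resulting $\sigma$-weight is precisely $\sigma^{-p_\theta}$. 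Substituting this bound into
\[
\|Tf\|_{L^{p_\theta,r}}^{r} \sim \int_0^\infty \bigl(\sigma^{1/p_\theta}(Tf)^{*}(\sigma)\bigr)^{r} \, \frac{d\sigma}{\sigma}
\]
and exchanging the order of integration by Fubini, followed by a Hardy-type inequality in $\sigma$, should transform the right-hand side into a constant multiple of $\|f\|_{L^{q_\theta,r}}^{r}$, as required.

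The main technical obstacle is choosing the exponent $\alpha$: the two endpoint contributions must scale with the \emph{same} power of $\sigma$, which forces a linear relation on $1/p_i$ and $1/q_i$ that is satisfied precisely by the interpolation identities $\tfrac{1}{p_\theta} = \tfrac{\theta}{p_1} + \tfrac{1-\theta}{p_2}$ and $\tfrac{1}{q_\theta} = \tfrac{\theta}{q_1} + \tfrac{1-\theta}{q_2}$. A secondary point is to handle the degenerate cases where some $p_i$ or $q_i$ equals $\infty$ and the case $r = \infty$, where the outer integral is replaced by a supremum; in both settings the decomposition must be truncated appropriately, but the argument carries through with only minor modifications.
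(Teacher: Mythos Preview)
The paper does not prove this lemma at all: it is stated in the preliminaries section as a known result, with a general reference to Grafakos~\cite{G} for background on Lorentz spaces, and the text moves directly on to the next lemma. So there is no ``paper's own proof'' to compare against.

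Your sketch is a standard and correct route to Hunt's interpolation theorem: upgrading the strong-type hypotheses to weak-type, then either invoking the real-interpolation identification $(L^{q_1},L^{q_2})_{\theta,r}=L^{q_\theta,r}$ or running the Marcinkiewicz decomposition by hand with the level-dependent truncation $f=f^\sigma+f_\sigma$. Both approaches you outline lead to the result; the choice of the exponent $\alpha$ in the threshold $\gamma(\sigma)=A\sigma^\alpha$ is indeed what forces the interpolation relations on $p_\theta,q_\theta$, and your remark on the degenerate cases ($p_i=\infty$, $q_i=\infty$, or $r=\infty$) is accurate. Since the paper treats this as a black box, your proposal actually goes beyond what the authors provide.
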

Lorentz spaces share many properties and estimates found in Lebesgue spaces $L^p$. Specifically, H\"older's inequality would be extended to Lorentz spaces in a similar manner. Furthermore, Lorentz spaces satisfy the Young-O'Neil convolution inequality (see \cite{N} and \cite{O}), of which the Hardy-Littlewood-Sobolev inequality is a particular case.

\begin{lemma}[H\"older's inequality] Given $1\leq p, p_1, p_2\leq\infty$ and $0<q, q_1, q_2\leq\infty$ such that $\frac{1}{p}=\frac{1}{p_1}+\frac{1}{p_2}$ and $\frac{1}{q}=\frac{1}{q_1}+\frac{1}{q_2}$,
\begin{equation}
\|fg\|_{L^{p,q}}\lesssim_{p_i,q_i}\|f\|_{L^{p_1,q_1}}\|g\|_{L^{p_2,q_2}}.
\end{equation}
\end{lemma}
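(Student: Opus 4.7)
The plan is to reduce this bilinear inequality to the classical Hölder inequality on the scale-invariant measure space $((0,\infty), dt/t)$ by passing to the nonincreasing rearrangement. Recall that for $1\le p<\infty$ and $0<q\le\infty$ the Lorentz quasi-norm admits the equivalent representation
$$\|f\|_{L^{p,q}}\sim \bigl\|t^{1/p}f^{*}(t)\bigr\|_{L^{q}(dt/t)},$$
where $f^{*}$ is the nonincreasing rearrangement of $|f|$ on $(0,\infty)$. This transfers the problem off $\R^3$ and onto a one-dimensional statement with a scale-invariant weight, which is much more tractable for the purpose of combining two independent factors.

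The key pointwise ingredient is the rearrangement bound
$$(fg)^{*}(t)\le f^{*}(t/2)\,g^{*}(t/2),$$
which follows from the distribution-function identity together with the set inclusion $\{|fg|>\lambda\mu\}\subset\{|f|>\lambda\}\cup\{|g|>\mu\}$. Splitting the weight $t^{1/p}$ as $t^{1/p_{1}}\cdot t^{1/p_{2}}$ via $\tfrac{1}{p}=\tfrac{1}{p_{1}}+\tfrac{1}{p_{2}}$, I would multiply through and take the $L^{q}(dt/t)$-norm of both sides; the scale invariance of $dt/t$ absorbs the shift $t\mapsto t/2$ with only a harmless constant, and a final application of the classical Hölder inequality in $L^{q}(dt/t)$ with exponents $q_{1}$ and $q_{2}$, based on $\tfrac{1}{q}=\tfrac{1}{q_{1}}+\tfrac{1}{q_{2}}$, produces precisely the claimed bound.

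The main subtlety is handling the endpoint and quasi-Banach regimes. When some $q_{i}=\infty$, the $L^{q_{i}}(dt/t)$-quantity is an essential supremum and the Hölder step collapses to a pointwise product bound. When $q_{i}<1$, the usual triangle inequality is unavailable, but since the chain of estimates is carried out entirely at the level of nonincreasing functions on $(0,\infty)$ it never invokes subadditivity, so the argument closes without modification. An alternative and more automatic route would be to apply the Hunt interpolation lemma recorded immediately above to the bilinear map $(f,g)\mapsto fg$, starting from the elementary Lebesgue-space Hölder inequality at the corners of the parameter diagram; this would be my fallback route if direct manipulation of $f^{*}$ near the endpoints became cumbersome.
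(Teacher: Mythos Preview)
Your argument is correct and is precisely the standard O'Neil rearrangement proof: the pointwise bound $(fg)^{*}(t)\le f^{*}(t/2)\,g^{*}(t/2)$, the factorisation $t^{1/p}=t^{1/p_1}t^{1/p_2}$, and then classical H\"older in $L^{q}(dt/t)$ with exponents $q_1,q_2$. The quasi-Banach case $q_i<1$ is indeed harmless here because H\"older's inequality $\|FG\|_{L^{q}}\le\|F\|_{L^{q_1}}\|G\|_{L^{q_2}}$ holds for all positive exponents with $1/q=1/q_1+1/q_2$, and no subadditivity is invoked.

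As for comparison with the paper: there is nothing to compare. The paper does not prove this lemma; it is simply recorded in Section~\ref{Lorentz-pro} as a standard fact about Lorentz spaces, with the reader referred to Grafakos~\cite{G} for background. Your sketch is exactly the argument one finds there (or in O'Neil's original paper~\cite{O}), so you have supplied what the paper omits. The only minor point not explicitly addressed in your write-up is the case $p_i=\infty$, which under the paper's convention forces $q_i=\infty$ and reduces to the trivial bound $(fg)^{*}\le\|f\|_{L^\infty}g^{*}$; this is easily absorbed into your endpoint discussion.
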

\begin{lemma}[Young-O'Neil inequality] Given $1<p, p_1, p_2<\infty$ and $0<q, q_1, q_2\leq\infty$ such that $\frac{1}{p}+1=\frac{1}{p_1}+\frac{1}{p_2}$ and $\frac{1}{q}=\frac{1}{q_1}+\frac{1}{q_2}$,
\begin{equation}
\|f\ast g\|_{L^{p,q}}\lesssim_{p_i,q_i}\|f\|_{L^{p_1,q_1}}\|g\|_{L^{p_2,q_2}}.
\end{equation}
\end{lemma}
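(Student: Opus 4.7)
The plan is to derive the Lorentz refinement from the classical Young inequality on Lebesgue scales by a twofold application of real interpolation, leveraging Hunt's interpolation (already at our disposal from the preceding lemma). Because convolution is bilinear, a single interpolation step only Lorentz-refines one of the two arguments at a time; I will therefore have to interpolate in $g$ first and then in $f$.

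First I would fix $f \in L^{r_1}(\R^3)$ and regard $T_f g := f \ast g$ as a linear operator in $g$. The classical Young inequality supplies the Lebesgue endpoint bounds
\[
\|T_f g\|_{L^{r}} \leq \|f\|_{L^{r_1}}\,\|g\|_{L^{r_2}}, \qquad \tfrac{1}{r}+1 = \tfrac{1}{r_1}+\tfrac{1}{r_2}.
\]
Choosing two distinct admissible pairs $(r_2^{(0)}, r^{(0)})$ and $(r_2^{(1)}, r^{(1)})$ that straddle $p_2$ and $p$, and applying Hunt's interpolation with parameter $\theta$ selected so that $\frac{1}{p_2} = \frac{1-\theta}{r_2^{(0)}}+\frac{\theta}{r_2^{(1)}}$, I obtain
\[
\|T_f g\|_{L^{p, s}} \lesssim \|f\|_{L^{r_1}}\,\|g\|_{L^{p_2, s}}
\]
for every $0 < s \leq \infty$, where $\frac{1}{p}+1 = \frac{1}{r_1}+\frac{1}{p_2}$ is inherited from the endpoint constraints.

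Next I would freeze $g \in L^{p_2, q_2}$, specialize $s = q_2$ in the display above, and reinterpret the resulting inequality as a linear bound $S_g f := f \ast g$ from $L^{r_1}$ into $L^{p, q_2}$ with operator norm controlled by $\|g\|_{L^{p_2, q_2}}$. Two choices $r_1^{(0)} \neq r_1^{(1)}$ surrounding $p_1$ and a second application of real interpolation, now using the mild Lorentz-target generalization of Hunt that follows from the $K$-functional identification $L^{p, q} = (L^{p_0}, L^{p_1})_{\theta, q}$, would upgrade this to $S_g : L^{p_1, q_1} \to L^{p, q}$, which is the claimed estimate.

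The main obstacle is the arithmetic of the secondary Lorentz index, namely the rule $\frac{1}{q} = \frac{1}{q_1}+\frac{1}{q_2}$ rather than the naive $q = q_1 = q_2$ that two successive linear interpolations would suggest. This relation is a genuinely bilinear feature of convolution and is most transparently recovered from O'Neil's pointwise rearrangement inequality
\[
(f \ast g)^{**}(t) \leq t\, f^{**}(t)\, g^{**}(t) + \int_t^{\infty} f^{*}(s)\, g^{*}(s)\, ds,
\]
after which a weighted Hardy inequality in $L^q((0, \infty); dt/t)$ assembles the two rearrangement factors into a product of Lorentz quasinorms. Alternatively, one can invoke the Lions--Peetre bilinear real interpolation theorem, which yields the rule $\frac{1}{q} = \frac{1}{q_1}+\frac{1}{q_2}$ automatically; in either approach the Young hypothesis $\frac{1}{p}+1 = \frac{1}{p_1}+\frac{1}{p_2}$ with $p > 1$ is precisely what is needed to make the Hardy-weighted integrals converge.
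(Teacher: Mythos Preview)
The paper does not supply a proof of this lemma; it simply records it as a classical fact with a citation to O'Neil's original paper. Your eventual route via O'Neil's pointwise rearrangement inequality (or, equivalently, Lions--Peetre bilinear real interpolation) is exactly the standard argument behind that citation, so in substance your proposal is correct.

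One remark on the structure of your write-up: the two-step linear Hunt interpolation you describe first cannot, on its own, produce the secondary-index rule $\tfrac{1}{q}=\tfrac{1}{q_1}+\tfrac{1}{q_2}$. Iterating linear interpolation as you outline yields at best $\|f\ast g\|_{L^{p,\min(q_1,q_2)}}$ on the left-hand side, which is strictly weaker than the claimed $L^{p,q}$ bound since $q\le\min(q_1,q_2)$ and Lorentz spaces nest the other way. You diagnose this yourself, so no harm is done, but it means the Hunt warm-up is ultimately dispensable: the entire content of the proof sits in the O'Neil rearrangement inequality plus the weighted Hardy estimate you mention at the end. If you want a clean self-contained argument, you may as well start there.
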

From Hunt interpolation, the equivalence of Sobolev norm, and the usual Sobolev embedding theorems, we can establish a similar Sobolev embedding in Lorentz spaces for the inverse-square operator $\mathcal{L}_a$.

\begin{lemma}[Sobolev embedding]\label{embedding:Lorentz} Fix $1<p<\infty$, $0<s<2$, and $0<\theta\leq\infty$ such that $\frac{1}{p}+\frac{s}{3}=\frac{1}{q}$. Then
\begin{equation}
\|f\|_{L^{p,\theta}(\R^3)}\lesssim_{p,s,\theta}\||\nabla|^{s}f\|_{L^{q,\theta}(\R^3)}\lesssim_{p,s,\theta}\|\mathcal{L}_a^{\frac{s}{2}}f\|_{L^{q,\theta}(\R^3)}.
\end{equation}
Additionally, the Lorentz spaces $L^{p,q}$ exhibit a nesting property with respect to the second index $q$. To be precise, we have the continuous embedding $L^{p,q_1}\hookrightarrow L^{p,q_2}$, which implies the inequality
\begin{equation}
\|f\|_{L^{p,q_2}}\lesssim_{p,q_1,q_2}\|f\|_{L^{p,q_1}}
\end{equation}
for all $0<q_1\leq q_2\leq\infty$.
\end{lemma}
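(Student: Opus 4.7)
The plan is to prove the two inequalities separately and then address the Lorentz nesting. For the first inequality $\|f\|_{L^{p,\theta}(\R^3)} \lesssim \||\nabla|^s f\|_{L^{q,\theta}(\R^3)}$, I would represent $f$ as a Riesz potential of its fractional derivative: $f = c_s\, |x|^{s-3} \ast (|\nabla|^s f)$. The kernel $|x|^{s-3}$ lies in $L^{3/(3-s),\infty}(\R^3)$ with norm depending only on $s$, so the Young--O'Neil convolution inequality applied with exponents $(\tfrac{3}{3-s},\infty)$ for the kernel and $(q,\theta)$ for $|\nabla|^s f$ delivers exactly $(p,\theta)$ on the output, since $\tfrac{1}{p}+1 = \tfrac{3-s}{3} + \tfrac{1}{q}$ is precisely the hypothesis $\tfrac{1}{p}+\tfrac{s}{3} = \tfrac{1}{q}$, and the second indices combine as $\tfrac{1}{\theta} = \tfrac{1}{\infty}+\tfrac{1}{\theta}$.

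For the second inequality, I would invoke Lemma \ref{Sobolev-norm}, which gives the Lebesgue bound $\|(-\Delta)^{s/2} f\|_{L^r} \lesssim \|\mathcal{L}_a^{s/2} f\|_{L^r}$ whenever $\tfrac{s+\sigma}{3} < \tfrac{1}{r} < \min\{1,\tfrac{3-\sigma}{3}\}$. Since the standing assumption $a\geq 0$ forces $\sigma\leq 0$, this range simplifies to $\tfrac{s}{3}<\tfrac{1}{r}<1$, an open interval containing $\tfrac{1}{q}$ (the constraints $q>1$ and $q<3/s$ are forced by $1<p<\infty$ together with $\tfrac{1}{q}=\tfrac{1}{p}+\tfrac{s}{3}$). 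I would then pick two distinct exponents $r_1\neq r_2$ near $q$ inside this interval and apply Hunt's interpolation to the sublinear operator $T := |\nabla|^s \mathcal{L}_a^{-s/2}$, using Hunt's free second Lorentz index to set the output and input second index both equal to our $\theta$. This upgrades the pair of Lebesgue bounds to $\||\nabla|^s f\|_{L^{q,\theta}} \lesssim \|\mathcal{L}_a^{s/2} f\|_{L^{q,\theta}}$ for arbitrary $0<\theta\leq\infty$.

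For the nesting $L^{p,q_1}\hookrightarrow L^{p,q_2}$ with $q_1\leq q_2$, this is a classical property of Lorentz spaces that follows directly from the non-increasing rearrangement definition by splitting the integral over $t$ and applying H\"older in the measure $\tfrac{dt}{t}$; I would simply cite it from a standard reference such as \cite{G}.

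The main obstacle I anticipate is the range bookkeeping in the second step: one must verify that the admissible interval from Lemma \ref{Sobolev-norm} is strictly larger than a single point, so that two distinct exponents $r_1,r_2$ genuinely exist for Hunt interpolation; this is where the assumption $a\geq 0$ (hence $\sigma\leq 0$) is essential, as otherwise the window could collapse. Once this check is in hand, every component of the proof is a packaging of existing tools — Young--O'Neil applied to the Riesz kernel, Lemma \ref{Sobolev-norm} combined with Hunt's interpolation for the operator comparison, and the classical Lorentz nesting for the second index.
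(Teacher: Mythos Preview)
Your proposal is correct and matches the paper's one-sentence justification preceding the lemma (Hunt interpolation combined with Lemma~\ref{Sobolev-norm} and the classical Sobolev embedding). For the first inequality you substitute the Young--O'Neil inequality applied to the explicit Riesz kernel $|x|^{s-3}\in L^{3/(3-s),\infty}$ in place of Hunt interpolation on the classical embedding; both routes are standard and yield the same bound, so this is only a cosmetic difference from the paper's sketch.

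One minor bookkeeping slip: your claim that $q>1$ is forced by $1<p<\infty$ is not quite right, since $\tfrac{1}{q}=\tfrac{1}{p}+\tfrac{s}{3}<1$ actually requires $p>\tfrac{3}{3-s}$, not merely $p>1$. This constraint is really an implicit hypothesis of the lemma (it is needed both for the Young--O'Neil step, which requires $1<q<\infty$, and for Lemma~\ref{Sobolev-norm} to apply at exponent $q$), so it does not undermine your strategy---but you should state it as a hypothesis rather than assert it follows automatically from $p>1$.
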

\begin{remark}\label{Lorentz:in}
For $p\geq 1$, one can obtain $|x|^{-\frac{1}{p}}\in L^{p,\infty}(\R^3)$.
\end{remark}

\section{Some techcnical Lemmas and proofs}\label{Lemma}
The primary objective of this section is to establish several key lemmas that are essential for the proof of Theorem \ref{decay-estimate}. Each lemma will be carefully stated and accomplished by a detailed proof to ensure the rigor of the subsequent arguments.
\subsection{Uniform regularity for the operator $\mathcal{L}_a$}\label{regularity:uniform} As is well-known, the uniform regularity for non-linear Schr\"odinger equation with the Laplace operator was established in \cite{CKSTT}. In this subsection, we apply an analogous argument to the inverse-square operator $\mathcal{L}_a$.

For this purpose, we first demonstrate that the $\dot{S}^1$ norm controls the following space-time norms:
\begin{lemma}\label{strichartz-norm}
For any Schwartz function $u$ on $I \times \mathbb{R}^3$,
$$
\begin{aligned}
\|\sqrt{\mathcal{L}_a} u\|_{L_{t}^{\frac{16}{3}}L_{x}^{\frac{8}{3}}}+\|u\|_{L_t^4 L_x^{\infty}}+\left\|u\right\|_{L_t^{\frac{160}{37}} L_x^{80}} \lesssim\|u\|_{\dot{S}^1},
\end{aligned}
$$
where all space-time norms are on $I \times \mathbb{R}^3$.
\end{lemma}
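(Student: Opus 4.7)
The plan is to handle the three left-hand norms separately, unpacking $\|u\|_{\dot{S}^1}$ via its defining square-sum over Littlewood--Paley projections adapted to $\mathcal{L}_a$, and translating between different scales using the Bernstein estimates of Lemma \ref{Bernstein} and the square-function estimate of \cite[Theorem 5.3]{KMVZZ}.

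First I would observe that $(16/3, 8/3)$ is Schr\"odinger-admissible in three dimensions, since $\tfrac{2\cdot 3}{16} + \tfrac{3\cdot 3}{8} = \tfrac{3}{8} + \tfrac{9}{8} = \tfrac{3}{2}$, and both indices exceed $2$. Applying the square function estimate in $L_x^{8/3}$ and then Minkowski's inequality to swap $L_t^{16/3}L_x^{8/3}$ with $\ell_N^2$ yields
\[
\|\sqrt{\mathcal{L}_a}\,u\|_{L_t^{16/3} L_x^{8/3}} \lesssim \Bigl\|\bigl(\textstyle\sum_N |P_N^a \sqrt{\mathcal{L}_a}\,u|^2\bigr)^{1/2}\Bigr\|_{L_t^{16/3} L_x^{8/3}} \leq \Bigl(\textstyle\sum_N \|P_N^a \sqrt{\mathcal{L}_a}\,u\|_{L_t^{16/3} L_x^{8/3}}^2\Bigr)^{1/2} \leq \|u\|_{\dot{S}^1}.
\]
For the third norm, a direct calculation shows $(160/37, 240/83)$ is admissible, and Bernstein gives $\|P_N^a u\|_{L_x^{80}} \lesssim N^{3(83/240 - 1/80)} \|P_N^a u\|_{L_x^{240/83}} = N\|P_N^a u\|_{L_x^{240/83}} \sim \|P_N^a \sqrt{\mathcal{L}_a}\,u\|_{L_x^{240/83}}$. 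Square function in $L_x^{80}$ followed by Minkowski in $L_t^{160/37}$ then reduces the estimate to the admissible sum $(\sum_N \|P_N^a \sqrt{\mathcal{L}_a}\,u\|_{L_t^{160/37} L_x^{240/83}}^2)^{1/2}$, controlled by $\|u\|_{\dot{S}^1}$.

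The middle term $\|u\|_{L_t^4 L_x^\infty}$ is the main obstacle, since $(4,\infty)$ is not admissible and $L_x^\infty$ is not captured by any square function. My plan is to bracket $\|P_N^a u(t)\|_{L_x^\infty}$ via Bernstein from the two admissible endpoints $(\infty, 2)$ and $(2, 6)$:
\[
\|P_N^a u(t)\|_{L_x^\infty} \lesssim N^{1/2}\|P_N^a \sqrt{\mathcal{L}_a}\,u(t)\|_{L_x^2} \quad\text{and}\quad \|P_N^a u(t)\|_{L_x^\infty} \lesssim N^{-1/2}\|P_N^a \sqrt{\mathcal{L}_a}\,u(t)\|_{L_x^6}.
\]
Setting $A(t) = (\sum_N \|P_N^a \sqrt{\mathcal{L}_a}\,u(t)\|_{L_x^2}^2)^{1/2}$ and $B(t) = (\sum_N \|P_N^a \sqrt{\mathcal{L}_a}\,u(t)\|_{L_x^6}^2)^{1/2}$, I would split the dyadic sum $\sum_N \|P_N^a u(t)\|_{L_x^\infty}$ at a time-dependent threshold $N_0(t)$, applying the first estimate together with Cauchy--Schwarz on the low frequencies (exploiting $\sum_{N\leq N_0} N \lesssim N_0$) and the second on the high frequencies (using $\sum_{N>N_0}N^{-1}\lesssim N_0^{-1}$), then optimize $N_0(t) \sim B(t)/A(t)$ to obtain the pointwise bound $\|u(t)\|_{L_x^\infty} \lesssim A(t)^{1/2}B(t)^{1/2}$. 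Finally, H\"older's inequality in $t$ with $\frac{1}{4} = \frac{1}{\infty} + \frac{1}{4}$ gives $\|u\|_{L_t^4 L_x^\infty} \lesssim \|A\|_{L_t^\infty}^{1/2} \|B\|_{L_t^2}^{1/2}$, and exchanging $\ell^2_N$ with $L_t^\infty$ via Minkowski and with $L_t^2$ via Fubini controls both factors by $\|u\|_{\dot{S}^1}$, since $(\infty,2)$ and $(2,6)$ are both admissible.
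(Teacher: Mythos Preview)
Your proposal is correct and follows essentially the same approach as the paper. The paper handles the first and third norms via \eqref{strichartz} and Sobolev embedding (which your square-function-plus-Bernstein argument reproduces), and for the $L_t^4 L_x^\infty$ norm it sets $c_N := \|P_N^a \sqrt{\mathcal{L}_a}\,u\|_{L_t^2 L_x^6} + \|P_N^a \sqrt{\mathcal{L}_a}\,u\|_{L_t^\infty L_x^2}$, derives the same two Bernstein bounds you write down, and then defers the remaining high/low frequency split to \cite[Lemma~3.1]{CKSTT}; your time-dependent threshold $N_0(t)\sim B(t)/A(t)$ followed by H\"older in $t$ is exactly that argument made explicit.
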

\begin{proof} The proof of Lemma \ref{strichartz-norm} closely follows the structure of \cite[Lemma 3.1]{CKSTT}. For the sake of clarity and fluency, we provide a brief proof here. The estimates $\|\sqrt{\mathcal{L}_a} u\|_{L_{t}^{\frac{16}{3}}L_{x}^{\frac{8}{3}}}$ and $\left\|u\right\|_{L_t^{\frac{160}{37}} L_x^{80}}$ are derived directly from \eqref{strichartz} and standard Sobolev embedding. The only exception is the $L_t^4 L_x^{\infty}$ norm, which requires more careful treatment because endpoint Sobolev embedding does not apply in the $L_x^{\infty}$ case. To address this, we introduce the quantity
$$
c_N:=\left\|P_N^a \sqrt{\mathcal{L}_a} u\right\|_{L_t^2 L_x^6}+\left\|P_N^a \sqrt{\mathcal{L}_a} u\right\|_{L_t^{\infty} L_x^2}.
$$
By the definition of $\dot{S}^1$, we then obtain the following estimate
$$
\left(\sum_N c_N^2\right)^{1 / 2} \lesssim\|u\|_{\dot{S}^1},
$$
which completes the necessary steps for establishing the required bounds.

Moreover, applying the Bernstein's inequality (see Lemma \ref{Bernstein}) yields
$$
N^{\frac{1}{2}}\left\|P_N^a u\right\|_{L_t^2 L_x^{\infty}} \lesssim c_N \text { and } N^{-\frac{1}{2}}\left\|P_N^a u\right\|_{L_t^{\infty} L_x^{\infty}} \lesssim c_N
$$
for any dyadic frequency $N$.

Define $a_N(t):=\left\|P_N^a u(t)\right\|_{L_x^{\infty}}$, we can deduce
$$
\left(\int_I a_N(t)^2 d t\right)^{1 / 2} \lesssim N^{-\frac{1}{2}} c_N,
$$
and
$$
\sup_{t \in I} a_N(t) \lesssim N^{\frac{1}{2}} c_N.
$$
Therefore, to establish the desired result, it remains to verify that
\begin{equation}\label{estimate-Lt4}
\|u\|_{L_t^4 L_x^{\infty}}^4 \lesssim \int_I\left(\sum_N a_N(t)\right)^4 d t.
\end{equation}
From the proof of \cite[Lemma 3.1]{CKSTT}, one can see \eqref{estimate-Lt4} is valid and we omit the detailed proof here.
\end{proof}

Next, we state the standard Strichartz estimates for the Schr\"odinger equation $i u_t+\mathcal{L}_a u=f$ with the inverse-square operator $\mathcal{L}_a$. Although these estimates can also be generalized to any $\dot{S}^k$ norms for the integer $k\geq0$, our focus in this article is exclusively on the case $k=1$, as it is central to the subsequent analysis of uniform regularity.
\begin{lemma}\label{estimate-strichartz}
Let $I$ be a compact time interval. Suppose $u: I \times \mathbb{R}^3 \rightarrow \mathbb{C}$ be a Schwartz solution to the Schr\"odinger equation $i u_t+\mathcal{L}_a u=f$ with initial data $u(t_0)$. Then, for any time $t_0 \in I$, and any admissible exponents $(q_1, r_1)$, we have the estimate
$$
\|u\|_{\dot{S}^1\left(I \times \mathbb{R}^3\right)} \lesssim\left\|u(t_0)\right\|_{\dot{H}^1\left(\mathbb{R}^3\right)}+C \left\|\mathcal{L}_a^{\frac{1}{2}} f\right\|_{L_t^{q_1^{\prime}} L_x^{r_1^{\prime}}}\left(I \times \mathbb{R}^3\right),
$$
where $p^{\prime}$ denotes the dual exponent to $p$, i.e., $1 / p^{\prime}+1 / p=1$.
\end{lemma}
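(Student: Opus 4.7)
The plan is to follow the classical frequency-localized Strichartz argument, adapted to the spectral Littlewood--Paley projections $P_N^a$ associated with $\mathcal{L}_a$. Since $P_N^a$ and $\mathcal{L}_a^{1/2}$ are both Borel functions of $\mathcal{L}_a$, they commute with each other, with $\mathcal{L}_a$, and with the propagator $e^{-it\mathcal{L}_a}$. Setting $v_N := \mathcal{L}_a^{1/2} P_N^a u$, I would apply $\mathcal{L}_a^{1/2} P_N^a$ to the equation to obtain
$$i\partial_t v_N + \mathcal{L}_a v_N = \mathcal{L}_a^{1/2} P_N^a f, \qquad v_N(t_0) = \mathcal{L}_a^{1/2} P_N^a u(t_0),$$
and then invoke the standard Strichartz estimate (the theorem of Burq et al.\ stated above) for each admissible pair $(q,r)$ and the chosen dual-admissible pair $(q_1,r_1)$:
$$\|v_N\|_{L_t^q L_x^r} \lesssim \|v_N(t_0)\|_{L_x^2} + \|\mathcal{L}_a^{1/2} P_N^a f\|_{L_t^{q_1'} L_x^{r_1'}}.$$

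The next step is to square and sum in the dyadic index $N$. For the initial-data term, spectral orthogonality of the projections $P_N^a$ in $L^2$ together with the sharp Hardy equivalence gives
$$\sum_N \|\mathcal{L}_a^{1/2} P_N^a u(t_0)\|_{L_x^2}^2 \lesssim \|\mathcal{L}_a^{1/2} u(t_0)\|_{L_x^2}^2 \sim \|u(t_0)\|_{\dot H^1}^2.$$
For the forcing term, the key observation is that since $(q_1,r_1)$ is Schr\"odinger-admissible we have $q_1',r_1'\leq 2$, so Minkowski's integral inequality lets one exchange the $\ell^2_N$ norm with the Lebesgue norms without loss:
$$\Bigl(\sum_N \|\mathcal{L}_a^{1/2} P_N^a f\|_{L_t^{q_1'} L_x^{r_1'}}^2\Bigr)^{1/2} \leq \Bigl\|\Bigl(\sum_N |\mathcal{L}_a^{1/2} P_N^a f|^2\Bigr)^{1/2}\Bigr\|_{L_t^{q_1'} L_x^{r_1'}}.$$
The square function estimate for $\mathcal{L}_a$ (Theorem 5.3 of \cite{KMVZZ}) then identifies this right-hand side with $\|\mathcal{L}_a^{1/2} f\|_{L_t^{q_1'} L_x^{r_1'}}$ up to constants. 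Finally, since all bounds are independent of the admissible pair $(q,r)$, taking the supremum recovers $\|u\|_{\dot S^1}$ on the left and finishes the estimate.

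The main obstacle, and the only step that is not purely formal, is the passage from the square-summed frequency pieces to the global norm of $\mathcal{L}_a^{1/2} f$. This is the place where the argument uses machinery specific to $\mathcal{L}_a$ rather than standard Fourier analysis: one needs both the Minkowski exchange (which relies on the admissibility relation forcing the dual exponents to lie below $2$) and the vector-valued Littlewood--Paley square function equivalence for $\mathcal{L}_a$. Once these are in hand, the estimate for $v_N$ can be squared, summed, and rearranged without any loss, and the remainder of the proof is routine Duhamel bookkeeping.
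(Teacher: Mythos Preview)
Your proposal is correct and follows essentially the same route as the paper: apply the commuting operators $P_N^a$ and $\mathcal{L}_a^{1/2}$ to the equation, invoke the Strichartz estimate of Burq et al.\ frequency-by-frequency, then square, sum in $N$, and use Minkowski (with $q_1',r_1'\le 2$) together with the square function estimate for $\mathcal{L}_a$ to recombine the forcing term. In fact your write-up is more explicit than the paper's sketch about the role of the square function estimate in the last step.
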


\begin{proof} The proof of Lemma \ref{estimate-strichartz} is inspired by the approach in \cite{CKSTT}, we will provide the proof briefly, please refer to \cite{CKSTT} for more detailed procedure. By applying the Strichartz estimates, we obtain
$$
\left\|P_N^a u\right\|_{L_t^q L_x^r\left(I \times \mathbb{R}^3\right)} \lesssim\left\|P_N^a u\left(t_0\right)\right\|_{L^2\left(\mathbb{R}^3\right)}+\left\|P_N^a f\right\|_{L_t^{q_1^{\prime}} L_x^{r_1^{\prime}}\left(I \times \mathbb{R}^3\right)}
$$
for any Schr\"odinger admissible pairs $(q, r)$, $(q_1, r_1)$. Next, we apply $\mathcal{L}_a^{\frac{1}{2}}$ to both sides of the above inequality. Moreover, the Littlewood-Paley projections $P_N^a$ and the operator $\mathcal{L}_a^{\frac{1}{2}}$ commute with $\left(i \partial_t+\mathcal{L}_a\right)$
for each $N$. Finally, squaring both sides, summing over $N$, and applying the Minkowski inequality yield the desired result.
\end{proof}
In the following, let $\mathcal{O}(Y)$ denote the expression that is schematically of the form $Y$.  Specifically, $\mathcal{O}(Y)$ refers to a finite linear combination of terms that resemble $Y$, but with some factors possibly replaced by their
complex conjugates. Then, we have the following lemma.
\begin{lemma}
For any space-time slab $I \times \mathbb{R}^3$, and consider smooth functions $v_1, \ldots, v_5$ on $I \times \mathbb{R}^3$, then we can acquire
\begin{align}\label{Quintilinear}
& \left\| \mathcal{L}_a^{\frac{1}{2}}\mathcal{O}\left(v_1 v_2 v_3 v_4 v_5\right)\right\|_{L_t^1 L_x^2} \nonumber\\
& \lesssim \sum_{\{a, b, c, d, e\}=\{1,2,3,4,5\}}\left\|v_a\right\|_{\dot{S}^1}\left\|v_b\right\|_{\dot{S}^1}\left\|v_c\right\|_{L_{t,x}^{10}}\left\|v_d\right\|_{\dot{S}^1}\left\|v_e\right\|_{\dot{S}^1},
\end{align}
where all the space-time norms are on the slab $I \times \mathbb{R}^3$.
\end{lemma}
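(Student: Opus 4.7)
The plan is to apply the fractional product rule from Lemma \ref{rule} iteratively in order to push the operator $\mathcal{L}_a^{1/2}$ onto a single factor of the quintilinear product, and then use H\"older's inequality in space and time together with Lemma \ref{strichartz-norm} to convert the resulting norms into $\dot{S}^1$ or $L^{10}_{t,x}$ norms. Since the right-hand side of \eqref{Quintilinear} is symmetric in $\{a,b,c,d,e\}$, it suffices to treat the typical term in which a distinguished factor $v_a$ carries the derivative, while another distinguished factor $v_c$ is placed in the $L^{10}_{t,x}$ slot.

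For the H\"older step, I would use the spatial partition
\begin{equation*}
\tfrac{1}{2} = \tfrac{3}{8} + \tfrac{1}{10} + 0 + \tfrac{1}{80} + \tfrac{1}{80},
\end{equation*}
so that $\mathcal{L}_a^{1/2} v_a \in L^{8/3}_x$, $v_c \in L^{10}_x$, one of the remaining factors sits in $L^{\infty}_x$, and the other two sit in $L^{80}_x$. The matching temporal partition
\begin{equation*}
1 = \tfrac{3}{16} + \tfrac{1}{10} + \tfrac{1}{4} + \tfrac{37}{160} + \tfrac{37}{160}
\end{equation*}
then places these factors into exactly the space-time norms controlled by Lemma \ref{strichartz-norm}, namely $L_t^{16/3} L_x^{8/3}$ for $\mathcal{L}_a^{1/2} v_a$, $L_{t,x}^{10}$ for $v_c$, $L_t^4 L_x^{\infty}$ for one factor, and $L_t^{160/37} L_x^{80}$ for each of the remaining two. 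Applying Lemma \ref{strichartz-norm} to convert the four non-$v_c$ factors into $\|\cdot\|_{\dot{S}^1}$ yields one summand of the right-hand side of \eqref{Quintilinear}; summing over which factor plays the role of $a$ and which the role of $c$ completes the bound.

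The main obstacle I anticipate is verifying the admissibility constraint $1 < p, p_1, q_2 < 3$ in Lemma \ref{rule} throughout the iteration. At each stage the product rule may be applied provided that the intermediate exponent at which $\mathcal{L}_a^{1/2}$ is being measured lies in $(1,3)$. After peeling off $k$ factors drawn from $\{L^{\infty}_x, L^{10}_x, L^{80}_x, L^{80}_x\}$, this exponent $q$ satisfies $1/q = 1/2 - \sum_{i=1}^{k} 1/p_i \in [3/8, 1/2]$, equivalently $q \in [2, 8/3] \subset (1,3)$, so Lemma \ref{rule} applies at every step. A short induction on the number of factors then produces the symbolic distribution identity invoked in the first paragraph and yields the estimate.
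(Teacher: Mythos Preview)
Your proposal is correct and follows essentially the same route as the paper: apply the fractional product rule (Lemma \ref{rule}) to distribute $\mathcal{L}_a^{1/2}$ onto a single factor, then H\"older with the identical exponent splits $L_t^{16/3}L_x^{8/3}\times L_t^4 L_x^{\infty}\times L_t^{160/37}L_x^{80}\times L_t^{160/37}L_x^{80}\times L_{t,x}^{10}$, and finish with Lemma \ref{strichartz-norm}. Your explicit check that the intermediate exponents stay in $[2,8/3]\subset(1,3)$ at each iteration of Lemma \ref{rule} is a detail the paper leaves implicit, so your write-up is in fact slightly more complete.
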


\begin{proof} Applying Lemma \ref{rule} and H\"older inequality, we encounter various terms to control including
$$
\left\|\mathcal{O}\left(\left(\mathcal{L}_a^{\frac{1}{2}} v_1\right) v_2 v_3 v_4 v_5\right)\right\|_{L_t^1 L_x^2} \lesssim\left\|\mathcal{L}_a^{\frac{1}{2}} v_1\right\|_{L_{t, }^{\frac{16}{3}}L_{x}^{\frac{8}{3}}}\left\|v_2\right\|_{L_t^4 L_x^{\infty}}\left\|v_3\right\|_{L_t^{\frac{160}{37}} L_x^{80}}\left\|v_4\right\|_{L_{t, }^{\frac{160}{37}}L_{x}^{80}}\left\|v_5\right\|_{L_{t, x}^{10}}.
$$
The claim \eqref{Quintilinear} follows then by \eqref{strichartz} and Lemma \ref{strichartz-norm}.
\end{proof}

Now, using the lemmas established above, we turn our attentions to showing uniform regularity that is one of goals of this section.
\begin{lemma}[Persistence of regularity]\label{lemma:regularity}
Let $I$ be a compact time interval, and let $u$ be a finite energy solution to equation \eqref{equation} on $I \times \mathbb{R}^3$ that satisfies the bound
$$
\|u\|_{L_{t, x}^{10}\left(I \times \mathbb{R}^3\right)} \leq M.
$$
Then, for any $t_0 \in I$, if initial data $u\left(t_0\right) \in \dot{H}^1$, the following estimate holds
\begin{equation}\label{Hk-Sk}
\|u\|_{\dot{S}^1\left(I \times \mathbb{R}^3\right)} \leq C(M, E(u))\left\|u\left(t_0\right)\right\|_{\dot{H}^1},
\end{equation}
where $C(M, E(u))$ is a constant depending on the bound $M$ and the energy $E(u)$ of the solution.
\end{lemma}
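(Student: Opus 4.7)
The plan is to adapt the classical CKSTT persistence-of-regularity scheme to the $\mathcal{L}_a$ setting, using the Strichartz bound (Lemma \ref{estimate-strichartz}) and the quintilinear estimate \eqref{Quintilinear} as the two main ingredients. First I would use the $L_{t,x}^{10}$ spacetime bound to partition $I=\bigcup_{j=0}^{J-1} I_j$ with $I_j=[t_j,t_{j+1}]$ so that $\|u\|_{L_{t,x}^{10}(I_j\times\mathbb{R}^3)}\le\eta$ for a small parameter $\eta>0$ to be chosen; this forces $J\lesssim(M/\eta)^{10}$.

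On each $I_j$ I would apply Lemma \ref{estimate-strichartz} with the admissible pair $(q_1,r_1)=(\infty,2)$ and source $f=|u|^4 u$, so that the source norm sits in $L_t^1 L_x^2$:
\begin{equation*}
\|u\|_{\dot{S}^1(I_j)}\le C_1\|u(t_j)\|_{\dot{H}^1} + C_2\bigl\|\mathcal{L}_a^{1/2}(|u|^4 u)\bigr\|_{L_t^1 L_x^2(I_j)}.
\end{equation*}
The quintilinear estimate \eqref{Quintilinear}, with all five entries set equal to $u$, then bounds the second term by $C_3\|u\|_{\dot{S}^1(I_j)}^4\|u\|_{L_{t,x}^{10}(I_j)}\le C_3\eta\|u\|_{\dot{S}^1(I_j)}^4$. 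Conservation of energy and the Hardy-based equivalence $\|\sqrt{\mathcal{L}_a}f\|_{L^2}\sim\|\nabla f\|_{L^2}$ (valid since $a\ge 0$) provide a uniform bound $\|u(t)\|_{\dot{H}^1}\le E_0$ with $E_0=E_0(E(u))$. I would then choose $\eta=\eta(E_0)$ small enough that $C_3\eta(2C_1 E_0)^3\le\tfrac14$ and run a standard continuity/bootstrap argument on the continuous map $s\mapsto\|u\|_{\dot{S}^1([t_j,s])}$ to conclude
\begin{equation*}
\|u\|_{\dot{S}^1(I_j)}\le 2C_1\|u(t_j)\|_{\dot{H}^1}.
\end{equation*}

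To chain the intervals, I would exploit that $(\infty,2)$ is one of the admissible pairs in the supremum defining $\dot{S}^0$, so $\|u(t_{j+1})\|_{\dot{H}^1}\lesssim\|u\|_{\dot{S}^1(I_j)}$; iterating the bound above then yields $\|u\|_{\dot{S}^1(I_j)}\le C_4^{\,j+1}\|u(t_0)\|_{\dot{H}^1}$. Applying Minkowski in $j$ inside the $\ell^2_N$ square-function defining $\dot{S}^1$ (using $\ell_j^1\hookrightarrow\ell_j^q$ for the time variable, then the triangle inequality in $\ell^2(N)$) should give
\begin{equation*}
\|u\|_{\dot{S}^1(I)}\le\sum_{j=0}^{J-1}\|u\|_{\dot{S}^1(I_j)}\le C_4^{\,J+1}\|u(t_0)\|_{\dot{H}^1},
\end{equation*}
which is the desired estimate since $J$ depends only on $M$ and $\eta=\eta(E(u))$.

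The main difficulty will be calibrating the bootstrap so that the smallness requirement on $\eta$ depends only on the energy-derived uniform bound $E_0$ and not on $\|u(t_0)\|_{\dot{H}^1}$; this is precisely what preserves the linear dependence on $\|u(t_0)\|_{\dot{H}^1}$ through the $J$-fold iteration. The Minkowski step is a secondary concern, but since the sum over subintervals and the $\ell^2_N$ frequency sum both sit outside the spacetime Lebesgue norms, it reduces to standard triangle inequalities.
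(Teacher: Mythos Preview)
Your proposal is correct and follows the same overall CKSTT scheme as the paper: partition $I$ into subintervals with small $L_{t,x}^{10}$ norm, apply Lemma~\ref{estimate-strichartz} together with the quintilinear estimate~\eqref{Quintilinear} on each piece, absorb the nonlinear term, and iterate. The one substantive difference lies in how the nonlinear term $C\varepsilon\|u\|_{\dot S^1(I_j)}^4$ is absorbed. The paper first invokes the stability result \cite[Theorem~2.12]{KMVZZ1} (with $\tilde u=u$, $e=0$) to obtain an \emph{a priori} bound $\|u\|_{\dot S^1(I)}\le C(M,E)$, and then simply chooses $\varepsilon\le\tfrac{1}{2C}\,C(M,E)^{-3}$ so that the nonlinear term becomes $\le\tfrac12\|u\|_{\dot S^1(I_j)}$ directly, with no continuity argument needed. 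You instead rely on energy conservation to bound $\|u(t)\|_{\dot H^1}\le E_0$ and close via a bootstrap on $s\mapsto\|u\|_{\dot S^1([t_j,s])}$. Your route is more self-contained (it avoids the external stability theorem), at the cost of having to justify continuity and finiteness of the $\dot S^1$ norm along the bootstrap; the paper's route is shorter but imports a nontrivial black box. Both calibrations make $\eta$ (resp.\ $\varepsilon$) depend only on $M$ and $E(u)$, so the linear dependence on $\|u(t_0)\|_{\dot H^1}$ survives the iteration in either case.
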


\begin{remark}For the Laplace operator $\Delta$, the $\dot{S}^k$ Strichartz norms for the case $k>1$ can be obtained using the standard iteration arguments, once the $L_{t, x}^{10}$ norm of a finite energy solution is controlled. However, for the inverse-square operator $\mathcal{L}_{a}$, due to the restriction imposed by the fractional product rule, we can only establish regularity results for $k=1$.

\end{remark}
\begin{proof} The overall idea of proof is to use a combination of Strichartz estimates and the stability theory established in \cite[Theorem 2.12]{KMVZZ1} to control the $\dot{S}^1$ norm via breaking the time interval into small subintervals.

First, applying \cite[Theorem 2.12]{KMVZZ1} with $\tilde{u}:=u$ and $e:=0$, we obtain the following estimate
$$
\|u\|_{\dot{S}^1\left(I \times \mathbb{R}^3\right)} \lesssim C(M, E).
$$
Additionally, by \eqref{Quintilinear} we also gain
\begin{equation}\label{estimate-linear}
\left\|\mathcal{L}_{a}^{\frac{1}{2}} \mathcal{O}\left(u^5\right)\right\|_{L_t^1 L_x^2} \lesssim\|u\|_{L_{t,x}^{10}}\|u\|_{\dot{S}^1}^4.
\end{equation}
A vital observation here is the presence of factor $\|u\|_{L_{t,x}^{10}}$ on the right-hand side. Next, we partition the time interval $I$ into $N$ subintervals $I_j:=\left[T_j, T_j+1\right]$ such that
$$
\|u\|_{L_{t,x}^{10}\left(I_j \times \mathbb{R}^3\right)} \leq \varepsilon,
$$
where $N \approx\left(1+\frac{M}{\varepsilon}\right)^{10}$ and $\varepsilon$ will be chosen later. Applying the Lemma \ref{estimate-strichartz} and \eqref{estimate-linear} on every interval $I_j$, we can obtain
$$
\begin{aligned}
\|u\|_{\dot{S}^1\left(I_j \times \mathbb{R}^3\right)} & \leq C\left(\left\|u\left(T_j\right)\right\|_{\dot{H}^1\left(\mathbb{R}^3\right)}+\left\|\mathcal{L}_{a}^{\frac{1}{2}}\left(|u|^4 u\right)\right\|_{L_t^1 L_x^2\left(I_j \times \mathbb{R}^3\right)}\right) \\
& \leq C\left(\left\|u\left(T_j\right)\right\|_{\dot{H}^1\left(\mathbb{R}^3\right)}+\|u\|_{L_{t,x}^{10}\left(I_j \times \mathbb{R}^3\right)} \cdot\|u\|_{\dot{S}^1\left(I_j \times \mathbb{R}^3\right)}^4\right) .
\end{aligned}
$$
Choosing $\varepsilon \leq\frac{1}{2C}(C(M, E))^{-3}$, we then acquire
\begin{equation}\label{estimate-bound}
\|u\|_{\dot{S}^1\left(I_j \times \mathbb{R}^3\right)} \leq 2 C\left\|u\left(T_j\right)\right\|_{\dot{H}^1\left(\mathbb{R}^3\right)}.
\end{equation}
Now, the bound \eqref{Hk-Sk} can be achieved by summing the bounds \eqref{estimate-bound} over each subinterval.
\end{proof}
\subsection{Lorentz space-time bounds}In this subsection, we first are devoted to proving the space-time Strichartz estimates for NLS, in which the
time and space variables to be placed in Lorentz space. To achieve this, we employ several properties of Lorentz space presented in Section \ref{Lorentz-pro}. Then, based on the established Lorentz Strichartz estimates, we derive global bounds for solutions to NLS in mixed Lorentz space-time norms, where non-endpoint Schr\"odinger-admissible pairs are restricted due to the fractional product rule.

\begin{lemma}[Lorentz-Strichartz estimates]\label{Lorentz-Strichartz}
Let $2<p,q<\infty$ be Schr\"odinger-admissible pairs. Then for all $f\in L^2$ and any space-time slab $I \times \mathbb{R}^3$, the linear evolution equation satisfies
\begin{equation}\label{mass-L}
\left\|e^{i t\mathcal{L}_a} f\right\|_{L_t^{p, 2} L_x^{q, 2}(I\times \mathbb{R}^3)} \lesssim_{p, q}\|f\|_{L^2\left(\mathbb{R}^3\right)} .
\end{equation}
Moreover, for all $0<\theta \leq \infty$, $1 \leq \phi \leq \infty$, and any time-dependent interval $I(t) \subset \mathbb{R}$,
\begin{equation}\label{nonlinear-S}
\left\|\int_{I(t)} e^{i(t-s) \mathcal{L}_a} F(s, x) d s\right\|_{L_t^{p, \theta} L_x^{q, \phi}(I\times \mathbb{R}^3)} \lesssim_{p, q, \theta, \phi}\|F\|_{L_t^{p^{\prime}, \theta} L_x^{q^{\prime}, \phi}\left(\mathbb{R}\times \mathbb{R}^3\right)} .
\end{equation}
\end{lemma}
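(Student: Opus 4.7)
My plan is to prove the inhomogeneous bound \eqref{nonlinear-S} first, by combining the pointwise time-decay of Lemma \ref{time-decay} with the Lorentz tools of Section \ref{Lorentz-pro}, and then deduce the homogeneous bound \eqref{mass-L} from it via a $TT^*$ duality argument. The non-endpoint hypothesis $2<p,q<\infty$ will enter in every step, both when upgrading the dispersive bound to Lorentz norms in the spatial variable and when applying Young-O'Neil in time.

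For \eqref{nonlinear-S}, the first step is to lift Lemma \ref{time-decay} from Lebesgue to Lorentz norms in $x$. Fix exponents $q_0,q_1$ with $2<q_0<q<q_1<\infty$; the dispersive bounds $\|e^{it\mathcal{L}_a}f\|_{L^{q_i}}\lesssim|t|^{-3(1/2-1/q_i)}\|f\|_{L^{q_i'}}$ hold by Lemma \ref{time-decay}, and Hunt's interpolation applied to the sublinear operator $e^{it\mathcal{L}_a}$ (at fixed $t$) yields $\|e^{it\mathcal{L}_a}f\|_{L^{q,\phi}}\lesssim|t|^{-2/p}\|f\|_{L^{q',\phi}}$ for every $\phi\in[1,\infty]$, where I use the Schr\"odinger-admissibility relation $2/p=3(1/2-1/q)$. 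Minkowski's inequality in $L^{q,\phi}_x$ (valid since $\phi\ge 1$) then brings the spatial norm inside the integral, so the matter reduces to estimating the scalar convolution $\int|t-s|^{-2/p}g(s)\,ds$ in $L^{p,\theta}_t$, where $g(s):=\|F(s)\|_{L^{q',\phi}_x}$. Since $|t|^{-2/p}\in L^{p/2,\infty}_t$ by Remark \ref{Lorentz:in}, Young-O'Neil with $\tfrac{1}{p}+1=\tfrac{1}{p'}+\tfrac{2}{p}$ and $\tfrac{1}{\theta}=\tfrac{1}{\theta}+\tfrac{1}{\infty}$ gives $\bigl\|\int|t-s|^{-2/p}g(s)\,ds\bigr\|_{L^{p,\theta}_t}\lesssim\|g\|_{L^{p',\theta}_t}$, which is \eqref{nonlinear-S} for $I(t)=\mathbb{R}$. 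A standard Christ-Kiselev argument, applicable since $p\ne p'$, then upgrades the bound to arbitrary time-dependent intervals $I(t)$.

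For \eqref{mass-L}, I invoke $TT^*$. Setting $Tf(t):=e^{it\mathcal{L}_a}f$, one has $TT^*F(t)=\int_{\mathbb{R}}e^{i(t-s)\mathcal{L}_a}F(s)\,ds$, so \eqref{nonlinear-S} with $\theta=\phi=2$ and $I(t)=\mathbb{R}$ yields $\|TT^*F\|_{L^{p,2}_tL^{q,2}_x}\lesssim\|F\|_{L^{p',2}_tL^{q',2}_x}$. Pairing then gives $\|T^*F\|_{L^2}^2=|\langle F,TT^*F\rangle|\le\|F\|_{L^{p',2}_tL^{q',2}_x}\|TT^*F\|_{L^{p,2}_tL^{q,2}_x}\lesssim\|F\|_{L^{p',2}_tL^{q',2}_x}^2$, where I exploit the Lorentz duality $(L^{p,2})^*=L^{p',2}$ valid for $1<p<\infty$. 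Hence $T^*$ is bounded from $L^{p',2}_tL^{q',2}_x$ into $L^2$, and a further duality gives $T\colon L^2\to L^{p,2}_tL^{q,2}_x$, which is exactly \eqref{mass-L}.

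The main technical obstacle is to keep every interpolation and convolution step inside the hypotheses of the underlying lemmas. Young-O'Neil in time requires $p/2>1$, i.e. exactly $p>2$, and Hunt's interpolation in space requires two companion exponents $q_0,q_1\in(2,\infty)$ straddling $q$, i.e. exactly $q\in(2,\infty)$; both conditions coincide with the stated non-endpoint range $2<p,q<\infty$. Secondary points are Minkowski's inequality in $L^{q,\phi}_x$, built into the hypothesis $\phi\ge 1$, and the Lorentz version of Christ-Kiselev, which is standard once $p\ne p'$ and therefore introduces no additional constraint.
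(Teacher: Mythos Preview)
Your proposal is correct and follows essentially the same route as the paper: interpolate the dispersive decay to Lorentz spatial norms, bring the $L_x^{q,\phi}$ norm inside the integral via Minkowski, apply Young--O'Neil in time to the resulting scalar convolution, and then deduce \eqref{mass-L} by $TT^*$. The only difference is that your Christ--Kiselev step is superfluous: once the spatial norm is inside, the integrand $|t-s|^{-2/p}g(s)$ is nonnegative, so the integral over any $I(t)$ is already dominated by the full convolution over $\mathbb{R}$, and Young--O'Neil on the latter handles every $I(t)$ at once---the paper proceeds exactly this way and never invokes Christ--Kiselev.
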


\begin{proof} We begin with linear dispersive decay. Note that
\begin{equation}\label{mass}
\|e^{it\mathcal{L}_a}f\|_{L^2}=\|f\|_{L^2},
\end{equation}
and
\begin{equation}\label{disper}
\|e^{it\mathcal{L}_a}\|_{L_x^{\infty}}\lesssim |t|^{-\frac{3}{2}}\|f\|_{L^1},
\end{equation}
which follows from Lemma \ref{time-decay}.
Applying the Hunt interpolation inequality to \eqref{mass} and \eqref{disper} yields that for $2<q<\infty$ and $0<\phi \leq \infty$,
\begin{equation}\label{linear-L}
\left\|e^{i t \mathcal{L}_a} f\right\|_{L_x^{q, \phi}} \lesssim_{ q, \phi}|t|^{-3\left(\frac{1}{2}-\frac{1}{q}\right)}\|f\|_{L_x^{q^{\prime}, \phi}} .
\end{equation}

We first prove \eqref{nonlinear-S}, and \eqref{mass-L} directly follows from \eqref{nonlinear-S} via the $TT^{\ast}$ argument. Note that $L_x^{p, \phi}$ is normable for $1<p<\infty$ and $1 \leq \phi \leq \infty$. Thus, for any time-dependent interval $I(t) \subset \mathbb{R}$ and any space-time slab $I \times \mathbb{R}^3$, applying the triangle inequality, \eqref{linear-L}, the Young-O'Neil convolutional inequality with Remark \ref{Lorentz:in}, we can achieve
$$
\begin{aligned}
\left\|\int_{I(t)} e^{i(t-s)\mathcal{L}_a} F(s,x) d s\right\|_{L_t^{p,\theta} L_x^{q, \phi}(I\times \mathbb{R}^3)} & \lesssim_{q,\phi}\left\|\int\big\| e^{i(t-s) \mathcal{L}_a} F(s, x)\big\|_{L_x^{q, \phi}} d s\right\|_{L_t^{p, \theta}(I)} \\
& \lesssim_{q, \phi}
\left\|\int|t-s|^{-3\left(\frac{1}{2}-\frac{1}{q}\right)}\big\|F(s, x)\big\|_{L_x^{q^{\prime},\phi}} d s \right\|_{L_t^{p,\theta}(I)}\\
& \lesssim_{ p, q, \theta, \phi}\left\||t|^{-3(\frac{1}{2}-\frac{1}{q})}\right\|_{L_t^{\frac{2 q}{3(q-2)}, \infty}}\| F \|_{L_t^{p^{\prime}, \theta} L_x^{q^{\prime}, \phi}(I\times \mathbb{R}^3)}\\
&\lesssim \| F \|_{L_t^{p^{\prime}, \theta} L_x^{q^{\prime}, \phi}(I\times \mathbb{R}^3)},
\end{aligned}
$$
which concludes the proof of \eqref{nonlinear-S}.

Now, we focus on \eqref{mass-L} and proceed with a $TT^{\ast}$ argument. Define the operator $T: L_x^2 \rightarrow L_t^{p, 2} L_x^{q, 2}$ by
$$
[T f](t, x)=\left[e^{i t \mathcal{L}_a} f\right](x).
$$
Then the action of $T T^{\ast}: L_t^{p^{\prime}, 2} L_x^{q^{\prime}, 2} \rightarrow L_t^{p, 2} L_x^{q, 2}$ is given by
$$
\left[T T^{\ast} F\right](t, x)=\int e^{i(t-s) \mathcal{L}_a} F(s, x)\, ds.
$$
Applying \eqref{nonlinear-S} with $I(t)=\mathbb{R}$ and $\phi=\theta=2$ to the $T T^{\ast}$ action, we establish that the operator $T T^{\ast}$ is bounded from $L_t^{p^{\prime}, 2} L_x^{q^{\prime}, 2}$ to $L_t^{p, 2} L_x^{q, 2}$. This result indicates that the operator $T$ is bounded from $L_x^2$ to $L_t^{p, 2} L_x^{q, 2}$, thereby completing the proof of the lemma.
\end{proof}

\begin{lemma}[Space-time bounds for the Lorentz space]\label{bound-stl}
Let $\theta, \phi\geq2$ and $2<p<\infty$, $2<q<3$ be a Schr\"odinger-admissible pair. Suppose that $u_{0}\in\dot{H}^1$ obeys the hypotheses of Theorem \ref{decay-estimate}. Then the global solution $u(t)$ to \eqref{equation} with initial data $u_{0}$ satisfies
\begin{equation}\label{bound-stl1}
\|\sqrt{\mathcal{L}_a}u\|_{L_{t}^{p,\theta}L_{x}^{q,\phi}}\leq C(\|u_{0}\|_{\dot{H}^1}).
\end{equation}
\end{lemma}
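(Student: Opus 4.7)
The plan is to combine Duhamel's formula with the Lorentz-Strichartz estimates of Lemma \ref{Lorentz-Strichartz}, the fractional product rule of Lemma \ref{rule}, and the global $\dot{S}^1$ bound from Lemma \ref{lemma:regularity}. Starting from the identity
\begin{equation*}
\sqrt{\mathcal{L}_a}u(t)=e^{-it\mathcal{L}_a}\sqrt{\mathcal{L}_a}u_0-i\int_0^t e^{-i(t-s)\mathcal{L}_a}\sqrt{\mathcal{L}_a}\bigl(|u|^4 u\bigr)(s)\,ds,
\end{equation*}
the homogeneous piece will be handled by \eqref{mass-L} combined with the Lorentz nesting $L^{p,2}\hookrightarrow L^{p,\theta}$ and $L^{q,2}\hookrightarrow L^{q,\phi}$ (valid since $\theta,\phi\geq 2$), producing a bound by $\|\sqrt{\mathcal{L}_a}u_0\|_{L^2}\sim\|u_0\|_{\dot{H}^1}$. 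The retarded Strichartz inequality \eqref{nonlinear-S} then reduces the whole problem to estimating $\|\sqrt{\mathcal{L}_a}(|u|^4 u)\|_{L_t^{p',\theta}L_x^{q',\phi}}$.

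For this nonlinear quantity I plan to invoke the fractional product rule with $s=1$. Since $2<q<3$ forces $q'\in(\tfrac{3}{2},2)$, the admissibility conditions of Lemma \ref{rule} are met, and one obtains, pointwise in time,
\begin{equation*}
\bigl\|\sqrt{\mathcal{L}_a}(|u|^4 u)\bigr\|_{L_x^{q'}}\lesssim\|u\|_{L_x^{r_1}}^{4}\bigl\|\sqrt{\mathcal{L}_a}u\bigr\|_{L_x^{q}},\qquad r_1:=\tfrac{4q}{q-2}.
\end{equation*}
Upgrading this to a Lorentz bound through Hölder in Lorentz spaces---using $L^{r_1}=L^{r_1,r_1}\hookrightarrow L^{r_1,\infty}$ for the four bare factors of $u$ so that only the derivative factor carries the second index $\phi$---and then applying Hölder in time with $\frac{1}{p'}=\frac{4}{s_1}+\frac{1}{p}$, $s_1:=\frac{4p}{p-2}$, yields
\begin{equation*}
\bigl\|\sqrt{\mathcal{L}_a}(|u|^4 u)\bigr\|_{L_t^{p',\theta}L_x^{q',\phi}}\lesssim\|u\|_{L_t^{s_1}L_x^{r_1}}^{4}\bigl\|\sqrt{\mathcal{L}_a}u\bigr\|_{L_t^{p,\theta}L_x^{q,\phi}}.
\end{equation*}
Using the admissibility relation $\frac{2}{p}+\frac{3}{q}=\frac{3}{2}$ a direct check gives $\frac{2}{s_1}+\frac{3}{r_1}=\frac{1}{2}$, so $(s_1,r_1)$ is Schrödinger-admissible with one derivative; therefore the Sobolev embedding of Lemma \ref{embedding:Lorentz} combined with \eqref{strichartz} provides $\|u\|_{L_t^{s_1}L_x^{r_1}}\lesssim\|u\|_{\dot{S}^1}$.

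To close the argument globally, I will exploit that Lemma \ref{lemma:regularity} and Theorem \ref{scatter} together bound $\|u\|_{\dot{S}^1(\mathbb{R}\times\mathbb{R}^3)}\leq C(\|u_0\|_{\dot{H}^1})$, so $\|u\|_{L_t^{s_1}L_x^{r_1}(\mathbb{R}\times\mathbb{R}^3)}$ is finite. One then partitions $\mathbb{R}$ into finitely many intervals $I_j=[T_j,T_{j+1}]$ on which this norm drops below a small absolute constant $\eta$; on each such $I_j$ the Duhamel bound reads
\begin{equation*}
\bigl\|\sqrt{\mathcal{L}_a}u\bigr\|_{L_t^{p,\theta}L_x^{q,\phi}(I_j)}\leq C\|u(T_j)\|_{\dot{H}^1}+C\eta\,\bigl\|\sqrt{\mathcal{L}_a}u\bigr\|_{L_t^{p,\theta}L_x^{q,\phi}(I_j)},
\end{equation*}
and taking $\eta\leq\frac{1}{2C}$ lets us absorb the last term. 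Since $a\geq 0$, energy conservation together with Sobolev embedding bounds $\|u(T_j)\|_{\dot{H}^1}$ uniformly by $\|u_0\|_{\dot{H}^1}$, and summing the finitely many pieces yields \eqref{bound-stl1}.

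The main obstacle is the Lorentz upgrade of the product rule: Lemma \ref{rule} is stated only for strong Lebesgue exponents, so care is needed when distributing the Hölder factors in Lorentz spaces so that precisely one factor inherits the second indices $(\theta,\phi)$ while the remaining four factors of $u$ sit in strong $L^p$ norms controlled by $\dot{S}^1$. A secondary point to verify is that the auxiliary exponents $r_1$ and $s_1$ lie in the admissible ranges required by Lemmas \ref{rule} and \ref{embedding:Lorentz} throughout the full parameter range $2<q<3$, $p>4$; this is a short computation exploiting the relation $\frac{2}{p}+\frac{3}{q}=\frac{3}{2}$.
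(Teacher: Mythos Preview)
Your approach is correct but takes a longer route than the paper. The key difference lies in how you handle the dual Lorentz norm $\|\sqrt{\mathcal{L}_a}(|u|^4u)\|_{L_t^{p',\theta}L_x^{q',\phi}}$. You retain the Lorentz indices and arrange for the derivative factor $\sqrt{\mathcal{L}_a}u$ to carry them, which produces a feedback term $\|\sqrt{\mathcal{L}_a}u\|_{L_t^{p,\theta}L_x^{q,\phi}}$ on the right and forces the interval-partitioning absorption argument. This also requires a Lorentz-space version of the product rule that the paper never states (though it can be justified, as you hint, via Hunt interpolation of Lemma~\ref{Sobolev-norm}, the pointwise Leibniz identity, and Lorentz H\"older).

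The paper's proof is a direct estimate with no bootstrap. It exploits the same nesting you used for the linear piece, but on the dual side: since $p,q>2$ one has $p'<2\leq\theta$ and $q'<2\leq\phi$, so Lemma~\ref{embedding:Lorentz} gives
\[
\|\sqrt{\mathcal{L}_a}(|u|^4u)\|_{L_t^{p',\theta}L_x^{q',\phi}}\lesssim\|\sqrt{\mathcal{L}_a}(|u|^4u)\|_{L_t^{p'}L_x^{q'}}.
\]
Now the nonlinear term sits in ordinary Lebesgue spaces, where Lemma~\ref{rule} applies verbatim, and the resulting factors $\|\sqrt{\mathcal{L}_a}u\|_{L_t^pL_x^q}$ and $\|u\|_{L_t^{s_1}L_x^{r_1}}$ are already bounded by the $\dot{S}^1$ regularity of Lemma~\ref{lemma:regularity} together with \eqref{strichartz}. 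No Lorentz product rule and no absorption are needed. Your argument works, but the nesting trick you invoke for the homogeneous piece, applied in the reverse direction to the inhomogeneous piece, makes the partitioning step superfluous.
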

\begin{proof}
From \eqref{strichartz} and Lemma \ref{lemma:regularity}, we obtain
\begin{equation}\label{bound:stl}
\|\sqrt{\mathcal{L}_a} u\|_{L_t^p L_x^q} \leq C\left(\left\|u_0\right\|_{\dot{H}^1}\right).
\end{equation}
We then turn our attention to \eqref{bound-stl1} in the Lorentz case. For all $\theta, \phi \geq 2$, utilizing the Duhamel formula
\begin{equation}\label{Duhamel}
u(t)=e^{it\mathcal{L}_a} u_0 +i \int_0^t e^{i(t-s) \mathcal{L}_a}\left[|u|^{4} u\right](s)\, ds,
\end{equation}
and Lemma \ref{Lorentz-Strichartz}, we can attain
$$
\|\sqrt{\mathcal{L}_a} u\|_{L_t^{p, \theta} L_x^{q, \phi}} \lesssim\left\|\sqrt{\mathcal{L}_a} u_0\right\|_{L_x^2}+\left\|\sqrt{\mathcal{L}_a}|u|^{4} u\right\|_{L_t^{p^{\prime}, \theta} L_x^{q^{\prime}, \phi}}.
$$
Since $p, q>2$, we observe that $p^{\prime}<\theta$ and $q^{\prime}<\phi$ for an arbitrary non-endpoint Schr\"odinger-admissible pair $(p, q)$. By Lemma \ref{rule} and the nesting property of Lorentz spaces in Lemma \ref{embedding:Lorentz}, we then estimate
$$
\begin{aligned}
\|\sqrt{\mathcal{L}_a} u\|_{L_t^{p, \theta} L_x^{q, \phi}} & \lesssim\left\|\sqrt{\mathcal{L}_a} u_0\right\|_{L_x^2}+\left\|\sqrt{\mathcal{L}_a}|u|^{4} u\right\|_{L_t^{p^{\prime}} L_x^{q^{\prime}}} \\
& \lesssim\left\|\sqrt{\mathcal{L}_a} u_0\right\|_{L_x^2}+\|\sqrt{\mathcal{L}_a} u\|_{L_t^p L_x^q}\|u\|^4_{L_t^{ \frac{4 p}{p-2}} L_x^{\frac{4 q}{q-2}}}\\
&\lesssim\left\|\sqrt{\mathcal{L}_a} u_0\right\|_{L_x^2}+\|\sqrt{\mathcal{L}_a} u\|_{L_t^p L_x^q}\|\sqrt{\mathcal{L}_a}u\|^4_{L_t^{ \frac{4 p}{p-2}} L_x^{\frac{6 p}{2p+2}}}.
\end{aligned}
$$
Due to the fact that $(\frac{4 p}{p-2}, \frac{4 q}{q-2})$ is a non-endpoint Schr\"odinger admissible pair with one spatial derivative, combining this with \eqref{bound:stl}, this concludes the proof of the Lemma \ref{bound-stl}. It's worth noting that the Lemma \ref{rule} is used in the second inequality above, so we need to impose the condition $q<3$ in Lemma \ref{bound-stl}.
\end{proof}

\section{The proof of Theorem \ref{decay-estimate}}\label{proof}
In this section, inspired by the method of \cite{K}, we focus on demonstrating Theorem \ref{decay-estimate} by utilizing the crucial Lemmas presented in Section \ref{Lemma}. According to the integrability of time at $t=0$, we divide the proof of Theorem \ref{decay-estimate} into two distinct cases: $2<p<6$ and $6\leq p<\infty$. We first handle the case $2<p<6$, where the decay estimates are relatively more straightforward and can be obtained via a bootstrap argument. Finally, we proceed with the more intricate case $6\leq p<\infty$.

It suffices to consider the case $t > 0$, as the case $t < 0$
will be obtained by time-reversal symmetry. Moreover, due to the density of Schwartz functions in $\dot{H}^1\cap L^{p'}$, it is enough to take account of the Schwartz solutions of the nonlinear Schr\"odinger equation.
\subsection{The case $2<p<6$}\label{integrable}
In this subsection, we prove Theorem \ref{decay-estimate} for the case $2<p<6$. This proof serves as a template for analyzing the remaining cases of Theorem \ref{decay-estimate}.

\begin{proof}[The proof of the case $2<p<6$]
Now, we turn our attention to establishing Theorem \ref{decay-estimate} for the case $2<p<6$. The main idea is to apply a bootstrap argument.

For $0<T\leq \infty$, we define the norm
\begin{equation}\label{def-norm}
\|u\|_{X(T)}=\sup_{t\in[0,T)}|t|^{3(\frac{1}{2}-\frac{1}{p})}\|u(t)\|_{L_{x}^p}.
\end{equation}
Then, it is sufficient to show the following estimate:
\begin{equation}\label{bound-infty}
\|u\|_{X(\infty)}\leq C(\|u_{0}\|_{\dot{H}^1})\|u_{0}\|_{L^{p'}}.
\end{equation}
To prove this, we employ a bootstrap argument.

Let $\varepsilon>0$ denote a small parameter that will be chosen later, which depend only on
universal constants. By Lemma \ref{bound-stl}, we can partition the interval $[0,\infty)$ into several subintervals $I_{j}=[T_{j-1},T_{j})$ with $j=1,\ldots, J(\|u_{0}\|_{\dot{H}^1}, \varepsilon)$, such that on each interval, the following bound holds:
\begin{equation}\label{bound:spacetime}
\|u\|_{L_{t}^{\frac{8p}{6-p},4}L_{x}^{\frac{4p}{p-2}}(I_{j})}\leq\|\sqrt{\mathcal{L}_a}u\|_{L_{t}^{\frac{8p}{6-p},4}L_{x}^{\frac{12p}{7p-6}}(I_{j})}<\varepsilon.
\end{equation}
A straightforward calculation confirms that the pair $(\frac{8p}{6-p},\frac{4p}{p-2})$ is a non-endpoint Schr\"odinger-admissible pair with one spatial derivative. Notably, this relies on the fact that $q=\frac{8p}{6-p}<\infty$, which requires the condition $p<6$.

Next, we seek to establish the following bound for all $j=1,\ldots, J(\|u_{0}\|_{\dot{H}^1}, \varepsilon)$,
\begin{equation}\label{aimbound}
\|u\|_{X\left(T_j\right)} \lesssim\left\|u_0\right\|_{L^{p^{\prime}}}+C\left(\left\|u_0\right\|_{\dot{H}^1}\right)\|u\|_{X\left(T_{j-1}\right)}+\varepsilon^{4}\|u\|_{X\left(T_j\right)}.
\end{equation}
By selecting a sufficiently small $\varepsilon>0$ based on the constants in the above inequality \eqref{aimbound}, we can iterate this argument over $j=1, \ldots, J\left(\left\|u_0\right\|_{\dot{H}^1}\right)$, ultimately obtaining the desired estimate \eqref{bound-infty}. This completes the proof of Theorem \ref{decay-estimate} in the case $2<p<6$.

Therefore, we focus on the estimate \eqref{aimbound}. Fix $t \in\left[0, T_j\right)$, from the Duhamel formula \eqref{Duhamel}
and the linear dispersive decay in Lemma \ref{time-decay},
we only need to compute the nonlinear part.
By Lemma \ref{time-decay}, H\"older's inequality and the definition in \eqref{def-norm}, we can acquire
\begin{align}\label{expression}
\left\|\int_0^t e^{i(t-s) \mathcal{L}_a}\left[|u|^4 u\right](s) d s\right\|_{L_x^p} &\lesssim\int_0^t|t-s|^{-3\left(\frac{1}{2}-\frac{1}{p}\right)}\|u^4(s)u(s)\|_{L^{p'}}ds\nonumber\\
&\lesssim \int_0^t|t-s|^{-3\left(\frac{1}{2}-\frac{1}{p}\right)}\|u(s)\|_{L^p}\|u(s)\|_{L_x^{\frac{4p}{p-2}}}^{4} d s \nonumber\\ & \lesssim \int_0^t|t-s|^{-3\left(\frac{1}{2}-\frac{1}{p}\right)}|s|^{-3\left(\frac{1}{2}-\frac{1}{p}\right)}\|u\|_{X(s)}\|u(s)\|_{L_x^{\frac{4p}{p-2}}}^{4} d s.
\end{align}
Then, we decompose the interval $[0, t)$ into two subintervals: $[0, t / 2)$ and $[t / 2, t)$. For $s \in[0, t / 2)$, we observe that $|t-s| \sim|t|$, and for $s \in[t / 2, t)$, we have $|s| \sim|t|$. This decomposition allows us to rewrite the expression \eqref{expression} as follows:
$$
\begin{aligned}
& \left\|\int_0^t e^{i(t-s) \mathcal{L}_a}\left[|u|^{4} u\right](s) d s\right\|_{L_x^p} \\
&\lesssim |t|^{-3\left(\frac{1}{2}-\frac{1}{p}\right)} \int_0^{t / 2}|s|^{-3\left(\frac{1}{2}-\frac{1}{p}\right)}\|u\|_{X(s)}\|u(s)\|_{L_x^{\frac{4p}{p-2}}}^{4} d s \\
&+|t|^{-3\left(\frac{1}{2}-\frac{1}{p}\right)} \int_{t / 2}^t|t-s|^{-3\left(\frac{1}{2}-\frac{1}{p}\right)}\|u\|_{X(s)}\|u(s)\|_{L_x^{\frac{4p}{p-2}}}^{4} d s.
\end{aligned}
$$
Since $2<p<6$, it follows that both $|s|^{-3\left(\frac{1}{2}-\frac{1}{p}\right)}$ and $|t-s|^{-3\left(\frac{1}{2}-\frac{1}{p}\right)}$ belong to the space $L_s^{\frac{2 p}{3(p-2)}, \infty}$ from Lemma \ref{Lorentz:in}. By applying the H\"older's inequality, we obtain
\begin{equation}\label{estimate:1}
\left\|\int_0^t e^{i(t-s)\mathcal{L}_a}\left[|u|^{4} u\right](s) d s\right\|_{L_x^p} \lesssim|t|^{-3\left(\frac{1}{2}-\frac{1}{p}\right)}\left\| \big\|u\big\|_{X(s)}\left\| u(s)\right\|_{L_x^{\frac{4p}{p-2}}}^{4}\right\|_{L_s^{\frac{2 p}{6-p}, 1}([0, t))}.
\end{equation}

For $t \in\left[0, T_j\right)$, we now decompose $[0, t)$ into two intervals: $[0, t) \cap\left[0, T_{j-1}\right)$ and $[0, t) \cap I_j$. Applying the bound in \eqref{bound:spacetime} and Lemma \ref{bound-stl}, we can get
\begin{align}\label{estimate:2}
& \left\|\big\|u\big\|_{X(s)}\big\| u(s)\big\|_{L_x^{\frac{4p}{p-2}}}^{4}\right\|_{L_s^{\frac{2p}{6-p}, 1}([0, t))}\nonumber\\
& \leq\|u\|_{X(T_{j-1})}\|u\|_{L_s^{\frac{8p}{6-p},4} L_x^{\frac{4p}{p-2}}([0, T_{j-1}))}^{4}+\|u\|_{X(T_j)}\|u\|_{L_s^{\frac{8p}{6-p},4} L_x^{\frac{4p}{p-2}}(I_j)}^{4} \nonumber\\
& \leq C\left(\|u_0\|_{\dot{H}^1}\right)\|u\|_{X(T_{j-1})}+\varepsilon^{4}\|u\|_{X(T_j)} .
\end{align}
Combining \eqref{estimate:1} with \eqref{estimate:2}, and taking the supremum over $t \in\left[0, T_j\right)$, we can attain
$$
\left\|\int_0^t e^{i(t-s) \mathcal{L}_a}\left[|u|^{4} u\right](s) d s\right\|_{X\left(T_j\right)} \lesssim C\left(\left\|u_0\right\|_{\dot{H}^1}\right)\|u\|_{X\left(T_{j-1}\right)}+\varepsilon^{4}\|u\|_{X\left(T_j\right)}.
$$
This, together with the linear dispersive term, establishes the bootstrap inequality \eqref{aimbound}. As a result, we achieve the proof of Theorem \ref{decay-estimate} for the case $2<p<6$.
\end{proof}

\subsection{The case $6\leq p<\infty$} We now conclude the proof of Theorem \ref{decay-estimate} by considering the case $6\leq p<\infty$. In this range of $p$, we adopt a similar structure of proof to the case $2<p<6$ discussed in Section \ref{integrable}, but it is important to emphasize that the linear dispersive decay is no longer integrable near $t = 0$ and thus a more refined argument is required.

\begin{proof}[The proof of the case $6\leq p<\infty$]
  As in the case $2<p<6$, we begin by decomposing the integral over $[0, t)$ into the interval $[0, t/2)$ and interval $[t/2, t)$. However, in contrast to the case $2<p<6$, these two intervals must now be handled separately due to the lack of integrability in the linear dispersive decay near $t = 0$ for the case $6 \leq p < \infty$. For this purpose, we refrain from employing the bootstrap argument that would produce a non-integrable term $|t|^{-3(\frac{1}{2}-\frac{1}{p})}$ on the interval $[0,\frac{t}{2})$.

First, we proceed with the interval $[0,\frac{t}{2})$ and aim to complete the following estimate
\begin{equation}\label{decay:early}
\left\|\int_{0}^{\frac{t}{2}}e^{it\mathcal{L}_a}(|u|^4u)(s)\,ds\right\|_{L^p}\leq C(\|u_{0}\|_{\dot{H}^1})|t|^{-3(\frac{1}{2}-\frac{1}{p})}\|u_{0}\|_{L^{p'}}.
\end{equation}
Similar to the previous case, we observe that $|t-s|\sim|t|$ for $s\in[0, t/2)$. Applying the linear dispersive decay from Lemma \ref{time-decay} as well as H\"older's inequality, we can have
\begin{align*}
\left\|\int_0^{t / 2} e^{i(t-s) \mathcal{L}_a}(|u|^4 u)(s)\, d s\right\|_{L^p}& \lesssim \int_0^{t / 2}|t-s|^{-3\left(\frac{1}{2}-\frac{1}{p}\right)}\left\|\left(|u|^4 u\right)(s)\right\|_{L_x^{\frac{p}{p-1}}} d s\nonumber\\
&\lesssim|t|^{-3\left(\frac{1}{2}-\frac{1}{p}\right)} \int_0^{t / 2}\|u(s)^{\frac{5p-6}{3p}}\|_{L_x^{\frac{9p}{5p-6}}}\|u(s)^{\frac{10p+6}{3p}}\|_{L_x^{\frac{9p}{4p-3}}}\,ds\nonumber\\
&=|t|^{-3\left(\frac{1}{2}-\frac{1}{p}\right)} \int_0^{t / 2}\|u(s)\|_{L_x^3}^{\frac{5 p-6}{3 p}}\|u(s)\|^{\frac{10 p+6}{3 p}}_{\frac{6(5 p+3)}{4 p-3}} d s.
\end{align*}
Then, by the case $2<p<6$, H\"older's inequality and Sobolev embedding, this yields
$$
\begin{aligned}
\|u(s)\|_{L_x^3}& \leq C\left(\left\|u_0\right\|_{\dot{H}^1}\right)|s|^{-1 / 2}\|u_0\|_{L_x^{\frac{3}{2}}}\\
& \leq C\left(\left\|u_0\right\|_{\dot{H}^1}\right)|s|^{-1 / 2}\left\|u_0\right\|_{L^{p^{\prime}}}^{\frac{3 p}{5 p-6}}\left\|u_0\right\|_{L^6}^{\frac{2 p-6}{5 p-6}} \\
& \leq C\left(\left\|u_0\right\|_{\dot{H}^1}\right)|s|^{-1 / 2}\left\|u_0\right\|_{L^{p^{\prime}}}^{\frac{3 p}{5 p-6}}.
\end{aligned}
$$
Therefore, we furthermore acquire
$$
\begin{aligned}
&\left\|\int_0^{t / 2} e^{i(t-s) \mathcal{L}_a}\left[|u|^4 u\right](s) d s\right\|_{L^p} \\
&\leq C\left(\left\|u_0\right\|_{\dot{H}^1}\right)|t|^{-3\left(\frac{1}{2}-\frac{1}{p}\right)}\left\|u_0\right\|_{L^{p^{\prime}}} \int_0^{t / 2}|s|^{-\frac{5 p-6}{6 p}}\|u(s)\|_{L_x^{\frac{6(5 p+3)}{4 p-3}}}^{\frac{10 p+6}{3 p}} d s \\
&\leq C\left(\left\|u_0\right\|_{\dot{H}^1}\right)|t|^{-3\left(\frac{1}{2}-\frac{1}{p}\right)}\left\|u_0\right\|_{L^{p^{\prime}}}\left\|\|u\|_{L_x^{\frac{6(5p+3)}{4p-3}}}^{\frac{10p+6}{3p}}\right\|_{L_s^{\frac{6p}{p+6},1}}\nonumber\\
&=C\left(\left\|u_0\right\|_{\dot{H}^1}\right)|t|^{-3\left(\frac{1}{2}-\frac{1}{p}\right)}\left\|u_0\right\|_{L^{p^{\prime}}}\|u\|_{L_s^{\frac{4(5 p+3)}{p+6}, \frac{10 p+6}{3 p}}L_x^{\frac{6(5 p+3)}{4 p-3}}}^{\frac{10 p+6}{3 p}}.
\end{aligned}
$$
Due to the fact that $\left(\frac{4(5 p+3)}{p+6}, \frac{6(5 p+3)}{4 p-3}\right)$ is a non-endpoint Schr\"odinger-admissible pair with one spatial derivative and $\frac{10 p+6}{3 p} \geq 2$, then we obtain
\begin{equation}
\|u\|_{L_s^{\frac{4(5 p+3)}{p+6}, \frac{10 p+6}{3 p}}L_x^{\frac{6(5 p+3)}{4 p-3}}}\leq \|\sqrt{\mathcal{L}_a}u\|_{L_s^{\frac{4(5 p+3)}{p+6}, \frac{10 p+6}{3 p}}L_x^{\frac{6(5 p+3)}{14 p+3}}}.
\end{equation}
Finally, utilizing Lemma \ref{bound-stl} achieves the proof of \eqref{decay:early}.

We now consider the interval $[t/2, t)$. On this interval, following a similar approach to the case $2<p<6$ in Section \ref{integrable}, we apply the bootstrap argument and utilize a Sobolev embedding prior to using the linear dispersive decay.

Similarly, for $0<T\leq \infty$, we also define the norm
\begin{equation}
\|u\|_{X(T)}=\sup_{t\in[0,T)}|t|^{3(\frac{1}{2}-\frac{1}{p})}\|u(t)\|_{L_x^p}.
\end{equation}
It then suffices to establish
\begin{equation}\label{bound-infty1}
\|u\|_{X(\infty)}\leq C(\|u_{0}\|_{\dot{H}^1})\|u_{0}\|_{L^{p'}}.
\end{equation}
To achieve this, we can decompose the interval $[0,\infty)$ into many subintervals $I_{j}=[T_{j-1},T_{j})$ where $j=1,\ldots, J(\|u_{0}\|_{\dot{H}^1}, \varepsilon)$. And on each subinterval, we have the bound
\begin{equation}\label{bound:spacetime1}
\|\sqrt{\mathcal{L}_a}u\|_{L_{t}^{\frac{8p}{p+3},2}L_{x}^{\frac{12p}{5p-3},2}(I_{j})}<\varepsilon,
\end{equation}
in which $\varepsilon>0$ is a small parameter to be chosen later, depending on the constant
$C(\|u_{0}\|_{\dot{H}^1})$. Moreover, due to the fact that the pair $(\frac{8p}{p+3},\frac{12p}{5p-3})$ is a non-endpoint Schr\"odinger-admissible pair, the bound \eqref{bound:spacetime1} is valid. For convenience, we write this pair as $(q,r)=(\frac{8p}{p+3},\frac{12p}{5p-3})$.

Next, using the standard iteration arguments, our goal is to show the following estimate for all $j=1,\ldots, J(\|u_{0}\|_{\dot{H}^1}, \varepsilon)$,
\begin{equation}\label{aimbound1}
\|u\|_{X(T_j)}\lesssim C(\|u_0\|_{\dot{H}^1})\left(\|u_0\|_{L^{p'}}+\|u\|_{X(T_{j-1})}+\varepsilon^4\|u\|_{X(T_j)}\right).
\end{equation}
Now, we turn our attention to verifying \eqref{aimbound1}. By combining the linear dispersive decay, the estimate \eqref{decay:early} with the Duhamel formula, for all $j$, we can acquire
\begin{equation}\label{estimate:bound2}
\|u\|_{X\left(T_j\right)} \lesssim C\left(\left\|u_0\right\|_{\dot{H}^1}\right)\left\|u_0\right\|_{L^{p^{\prime}}}+\left\|\int_{t / 2}^t e^{i(t-s) \mathcal{L}_a}\left[|u|^2 u\right](s) d s\right\|_{X\left(T_j\right)}.
\end{equation}
Thus, it remains to consider the contribution from the interval $[\frac{t}{2}, t)$.

We first take a Sobolev embedding and apply the linear dispersive decay to obtain that
\begin{align}\label{estimate-normLaa}
&\left\|\int_{t / 2}^t e^{i(t-s) \mathcal{L}_a}\left[|u|^{4} u\right](s) d s\right\|_{L^p} \nonumber\\
& \lesssim \int_{t / 2}^t\left\|e^{i(t-s)\mathcal{L}_a}\mathcal{L}_a^{\frac{1}{2}(1-\frac{3}{2 p})}\left[|u|^{4} u\right](s)\right\|_{L_x^{\frac{6 p }{2 p+3}}} d s \nonumber\\
& \lesssim \int_{t / 2}^t|t-s|^{\frac{3-p}{2 p}}\left\|\mathcal{L}_a^{\frac{1}{2}(1-\frac{3}{2 p})}\left[|u|^{4} u\right](s)\right\|_{L_x^{\frac{6p}{4p-3}}} d s.
\end{align}
It is worth noting that the Sobolev embedding here does not hold for $p=\infty$.
Combining H\"older's inequality, Lemma \ref{rule} with Sobolev embedding, we observe
\begin{align}\label{estimate-normLa}
\left\|\mathcal{L}_a^{\frac{1}{2}(1-\frac{3}{2 p})}\left[|u|^{4} u\right](s)\right\|_{L_x^{\frac{6p}{4p-3}}}&\leq \|u\|_{L^p}\|u^3\|_{L^{\frac{4p}{p-3}}}\|\mathcal{L}_a^{\frac{1}{2}(1-\frac{3}{2 p})}u\|_{L^{\frac{12p}{5p-9}}}\nonumber\\
&\leq \|u(s)\|_{L_x^p}\|\sqrt{\mathcal{L}_a} u(s)\|_{L_x^{r}}^{4},
\end{align}
provided that
$\|u(s)\|_{L^{\frac{12p}{p-3}}}\leq\|\sqrt{\mathcal{L}_a}u\|_{L^{\frac{12p}{5p-3}}}$
and
$\|\mathcal{L}_a^{\frac{1}{2}(1-\frac{3}{2 p})}u(s)\|_{L^{\frac{12p}{5p-9}}}\leq\|\sqrt{\mathcal{L}_a}u(s)\|_{L^{\frac{12p}{5p-3}}}.
$
Therefore, from Lemma \ref{embedding:Lorentz}, \eqref{estimate-normLaa} and \eqref{estimate-normLa}, it suffices to estimate
\begin{align}
\left\|\int_{t / 2}^t e^{i(t-s) \mathcal{L}_a}\left[|u|^{4} u\right](s) d s\right\|_{L^p} &\lesssim \int_{t / 2}^t|t-s|^{\frac{3-p}{2 p}}\|u(s)\|_{L_x^p}\|\sqrt{\mathcal{L}_a} u(s)\|_{L_x^{r, 2}}^{4} d s\nonumber\\
& \lesssim|t|^{-3\left(\frac{1}{2}-\frac{1}{p}\right)} \int_{t / 2}^t|t-s|^{\frac{3-p}{2 p}}\|u\|_{X(s)}\|\sqrt{\mathcal{L}_a} u(s)\|_{L_x^{r, 2}}^{4} d s \nonumber\\
& \sim|t|^{-3\left(\frac{1}{2}-\frac{1}{p}\right)}\left\| \|u(s)\|_{X(s)} \|\sqrt{\mathcal{L}_a} u(s)\|_{L_x^{r, 2}}^{4}\right\|_{L_s^{\frac{2p}{p+3}, 1}([\frac{t}{2}, t))},
\end{align}
where we use the H\"older's inequality and the fact $|s| \sim|t|$ for $s \in[\frac{t}{2}, t)$.

Ultimately, we take the supremum over $t \in\left[0, T_j\right)$ and decompose the interval $[t / 2, t)$ into two parts: $[t / 2, t) \cap I_j$ and $[t / 2, t) \cap\left[0, T_{j-1}\right)$. Since $(q,r)$ is a non-endpoint Schr\"odinger-admissible pair, applying Lemma \ref{bound-stl} and \eqref{bound:spacetime1}, we can gain
\begin{align}\label{estimate:aimbound}
&\Big\|\int_{t / 2}^t e^{i(t-s)\mathcal{L}_a}\left[|u|^{4} u\right](s) d s\Big\|_{X(T_j)} \nonumber\\
& \lesssim\|u\|_{X(T_{j-1})}\|\sqrt{\mathcal{L}_a} u\|_{L_t^{q,4}L_x^{r, 2}}^4+\|u\|_{X(T_j)}\|\sqrt{\mathcal{L}_a} u\|_{L_t^{q,4} L_x^{r, 2}(I_j)}^{4} \nonumber\\
& \lesssim C\left(\|u_0\|_{\dot{H}^1}\right)\left(\|u\|_{X(T_{j-1})}+\varepsilon^{4}\|u\|_{X(T_j)}\right).
\end{align}
\end{proof}

 From estimates \eqref{estimate:bound2} and \eqref{estimate:aimbound}, this yields the bootstrap argument \eqref{aimbound1}. By choosing a sufficiently small $\varepsilon>0$ relative to the constants in \eqref{aimbound1} and iterating over $j=1, \ldots, J\left(\left\|u_0\right\|_{\dot{H}^1}\right)$, we finish the proof of Theorem \ref{decay-estimate}.
\subsection*{Acknowledgements} C.B.X. was partially supported by Qinghai Natural Science Foundation (No.2024-ZJ-976) and NSFC (No.12401296).

\begin{center}

\end{center}

\begin{thebibliography}{99}


\bibitem{Bourgain} J. Bourgain, Global well-posedness of defocusing $3D$ critical NLS in the radial case, \textit{J. Amer. Math. Soc.}, {\bf12} (1999), 145-171.

\bibitem{BPST} N. Burq, F. Planchon, J. Stalker and A. S. Tahvildar-Zadeh, Strichartz estimates for the
wave and Schr\"odinger equations with potentials of critical decay, \textit{ Indiana Univ. Math. J.}, {\bf 53} (2004), 1665-1680.

\bibitem{BPST1} N. Burq, F. Planchon, J. Stalker and A. S. Tahvildar-Zadeh, Strichartz estimates for the wave and Schr\"odinger equations with the inverse-square potential, \textit{ J. Funct. Anal.}, {\bf 203} (2003), 519-549.

\bibitem{C} T. Cazenave, Semilinear Schr\"odinger Equations, \textit{ American Mathematical Soc.}, {\bf 10}(2003).

\bibitem{CK} M. Christ and A. Kiselev, Maximal functions associated to filtrations, \textit{J. Funct. Anal.}, {\bf 179} (2001), 409-425.

\bibitem{CW}
M. Christ and M. Weinstein, Dispersion of small amplitude solutions of the generalized
Korteweg-de Vries equation, \textit{ J. Funct. Anal.}, {\bf 100} (1991), 87-109.

\bibitem{CKSTT}
J. Colliander, M. Keel, G. Staffilani, H. Takaoka, and T. Tao, Global well-posedness and scattering for the energy-critical nonlinear Schr\"odinger equation in $\R^3$,  \textit{ Annals of Mathematics}, {\bf167} (2008),767-865.

\bibitem{D} T. Duyckaerts, A singular critical potential for the Schr\"odinger operator, \textit{ Canad. Math. Bull.} {\bf50}(2007), 35-47.


\bibitem{FKVZ}  C. Fan, R. Killip, M. Visan, and Z. Zhao. Dispersive decay for the mass-critical nonlinear Schr\"odinger equation. Preprint arXiv:2403.09989.

\bibitem{FZ}  C. Fan and Z. Zhao. Decay estimates for nonlinear Schr\"odinger equations. \textit{Discrete and Continuous Dynamical Systems}, {\bf 41}(2021), 3973-3984.

\bibitem{FFFP} L. Fanelli, V. Felli, M. A. Fontelos, and A. Primo, Time decay of scaling critical electromagnetic Schr\"odinger flows, \textit{ Comm. Math. Phys.} {\bf 324} (2013), 1033-1067.

\bibitem{FFFP1}
 L. Fanelli, V. Felli, M. A. Fontelos, and A. Primo, Time decay of scaling invariant electromagnetic Schr\"odinger equations on the plane, \textit{ Comm. Math. Phys.} {\bf 337} (2015), 1515-1533.

\bibitem{GVV} M. Goldberg, L. Vega, N. Visciglia, Counterexamples of Strichartz inequalities for Schr\"odinger equations with repulsive potentials, \textit{Int. Math. Res. Not.}, (2006) 13927.

\bibitem{G} L. Grafakos., Classical Fourier Analysis. Springer New York, 2014.

\bibitem{GHS} Z. Guo, C. Huang, and L. Song. Pointwise decay of solutions to the energy critical nonlinear Schr\"odinger equations. \textit{Journal of Differential Equations}, {\bf 366}(2023), 71-84.

\bibitem{KS} H. Kalf, U.W. Schmincke, J. Walter, R. W\"ust, On the spectral theory of Schr\"odinger and Dirac operators with strongly singular potentials, in: Spectral Theory and Differential Equations, in: Lect. Notes in Math., vol. 448, Springer, Berlin, 1975, pp. 182-226.

\bibitem{KMVZZ} R. Killip., C. Miao., M. Visan., J. Zhang., J. Zheng., Sobolev spaces adapted to the Schr\"odinger operator with inverse-square potential, \textit{Math. Z.} {\bf 288}(2018), 1273-1298.

\bibitem{KMVZZ1} R. Killip., C. Miao., M. Visan., J. Zhang., J. Zheng., The energy-critical NLS with inverse-square potential, \textit{Discrete and Continuous Dynamical Systems.}, {\bf 37}(2017), 3831-3866.

\bibitem{KV} R. Killip and M. Visan, Global well-posedness and scattering for the defocusing quintic NLS in three dimensions, \textit{ Anal. PDE}, {\bf5}(2012), 855-885.

\bibitem{KMVZ} R. Killip, J. Murphy, M. Visan, and J. Zheng, The focusing cubic NLS with inverse square potential in three space dimensions, \textit{Differ. Integr. Equations}, {\bf30}(2017), 161-206.

\bibitem{LMM} J. Lu, C. Miao, and J. Murphy, Scattering in $H^{1}$ for the intercritical NLS with an inverse-square potential, \textit{J. Differ. Equations}, {\bf264}(2018), 3174-3211.

\bibitem{K} M. Kowalski, Dispersive decay for the energy-critical nonlinear Schr\"odinger equation, Preprint arXiv: 2411.01466.

\bibitem{M} H. Mizutani, Remarks on endpoint Strichartz estimates for Schr\"odinger equations with the critical inverse-square potential, \textit{J. Differ. Equ.}, {\bf263}(2017), 3832-3853.

\bibitem{MWYZ} C. Ma, H. Wang, X. Yu, and Z. Zhao. On the decaying property of quintic NLS on $3d$ hyperbolic space. \textit{Nonlinear Analysis}, {\bf 247}(2024), 113599.

\bibitem{MSZ} C. Miao., X. Su., J. Zheng., The $W^{s,p}$-boundedness of stationary wave operators for the Schr\"odinger operator with the inverse-square potential. \textit{Trans. Amer. Math. Soc.} {\bf 376}(2023), 1739-1797.

\bibitem{N} K. Nakanishi., Asymptotically-free solutions for the short-range nonlinear Schr\"odinger equation. SIAM Journal on Mathematical Analysis, {\bf 32}(2001), 1265-1271.

\bibitem{O} R. O'Neil., Convolution operators and $L(p,q)$ spaces. \textit{Duke Mathematical Journal}, {\bf 30}(1963), 129-142.

\bibitem{PST} F. Planchon, J. Stalker, A.S. Tahvildar-Zadeh, $L^p$ estimates for the wave equation with the inverse-square potential, \textit{Discrete Contin. Dyn. Syst.}, {\bf9} (2003) 427-442.

\bibitem{PST1} F. Planchon, J. Stalker, A.S. Tahvildar-Zadeh, Dispersive estimate for the wave equation with the inverse-square potential, \textit{Discrete Contin. Dyn. Syst.}, {\bf9} (2003) 1387-1400.

\bibitem{RV} Ryckman and M. Visan, Global well-posedness and scattering for the defocusing energy-critical nonlinear Schr\"odinger equation in $\R^{1+4}$, \textit{Amer. J. Math.}, {\bf129} (2007), 1-60.

\bibitem{S} W. Schlag, Dispersive estimates for Schr\"odinger operators: a survey, \textit{Ann. of Math.}, {\bf163}(2007), 255-285.

\bibitem {SSS} W. Schlag, A. Soffer, W. Staubach, Decay for the wave and Schr\"odinger evolutions on manifolds with conical ends, I, \textit{Trans. Amer. Math. Soc.}, {\bf362} (2010) 19-52.

\bibitem{SSS1} W. Schlag, A. Soffer, W. Staubach, Decay for the wave and Schr\"odinger evolutions on manifolds with conical ends, II, \textit{Trans. Amer. Math. Soc.} {\bf362} (2010) 289-318.

\bibitem{T} T. Tao, Nonlinear Dispersive Equations: Local and Global Analysis, \textit{American Mathematical Soc.}, {\bf 106}(2006).


\bibitem{V} M. Visan., The defocusing energy-critical nonlinear Schr\"odinger equation in higher dimensions. \textit{Duke Math. J.}, {\bf 138}(2007), 281-374.

\bibitem{V1} M. Visan., Global well-posedness and scattering for the defocusing cubic nonlinear Schr\"odinger equation in four dimensions. \textit{ International Mathematics Research Notices}, {\bf 2012}(2011), 1037-1067.

\bibitem{V2} M. Visan., The Defocusing Energy-Critical Nonlinear Schr\"odinger Equation in Dimensions Five and Higher, Ph. D Thesis, UCLA, 2006.

\bibitem{VZ} J. Vazquez, E. Zuazua, The Hardy inequality and the asymptotic behaviour of the heat equation with an inverse-square potential, \textit{J. Funct. Anal.}, {\bf173} (2000) 103-153.

\bibitem{Z} J. Zheng, Focusing NLS with inverse square potential, \textit{Journal of Mathematical Physics}, {\bf 59}, 11502(2018).

\bibitem{ZZ} J. Zhang and J. Zheng, Scattering theory for nonlinear Schr\"odinger with inverse-square potential, \textit{J. Funct. Anal.}, {\bf267}(2014), 2907-2932.


\end{thebibliography}
\end{document}